\journal{\textcolor{white}{a}}
\pgfplotsset{compat=1.17} 
\newtheorem{theorem}{Theorem}[section]
\newtheorem{lemma}[theorem]{Lemma}
\newtheorem{proposition}[theorem]{Proposition}
\newtheorem{corollary}[theorem]{Corollary}
\newtheorem{remark}[theorem]{Remark}
\theoremstyle{definition}
\newtheorem{example}[theorem]{Example}
\newcommand{\norm}[1]{|#1|}
\def\Comp{\mathsf{Comp}}
\newcommand{\cb}{\bs c}
\newcommand{\Rb}{\bs G}
\newcommand{\aaa}{\operatorname{\mathfrak{a}}}
\def\mb{\mathbb}
\def\bs{\boldsymbol}
\def\mc{\mathcal}
\newcommand{\E}{\operatorname{\mb E}}
\newcommand{\R}{\mb R}
\newcommand{\N}{\mb N}
\newcommand{\Z}{\mb Z}
\newcommand{\s}{\mb S}
\renewcommand{\P}{\operatorname{\mb P}}
\def\tff{\uparrow \infty}
\def\la{\lambda}
\def\b{\beta}
\def\bet{\begin{theorem}}
\def\ent{\end{theorem}}
\def\f{\frac}
\def\one{\operatorname{\mathds{1}}}%{\mathbbmss{1}}
\def\bel{\begin{lemma}}
\def\enl{\end{lemma}}
\def\bep{\begin{proof}}
\def\enp{\end{proof}}
\def\Var{\operatorname{\ms{Var}}}
\def\ms{\mathsf}
\def\r{\rho}
\def\su{\subseteq}
\def\e{\varepsilon}
\def\bepr{\begin{proposition}}
\def\enpr{\end{proposition}}
\def\ot{[0, 1]}
\def\ff{\infty}
\def\s{\sigma}
\def\GG{\mc G}
\def\PP{\mc P}
\def\RR{\mc G_{d}}
\def\RRk{\mc G_{d,k}}
\def\AA{\mc A}
\def\Cov{\operatorname{\ms{Cov}}}
\def\dist{\operatorname{\ms{dist}}}
\def\been{\begin{enumerate}}
\def\enen{\end{enumerate}}
\def\im{\item}
\def\diam{\operatorname{\ms{diam}}}
\def\co{\colon}
\def\sm{\setminus}
\def\De{\Delta}
\def\d{\,\mathrm{d}}
\def\xx{{\boldsymbol x}}
\def\yy{{\boldsymbol y}}
\def\zz{{\boldsymbol z}}
\def\vp{\varphi}
\def\es{\emptyset}
\def\bs{\boldsymbol}
\def\lan{\langle}
\def\ran{\rangle}
\def\a{\alpha}
\def\dxx{\d\xx}
\def\cmm{c_{\ms{MM}}}
\definecolor{wiasblue}   {cmyk}{1.0, 0.60, 0, 0}
\def\CCC{\bs G}
\def\tb{\bs t}
\def\Rips{\ms{GG}}
\def\II{\mc A^{(m)}}
\def\cdo{c_{\ms{Dom}}}
\def\bee{\begin{example}}
\def\ene{\end{example}}
\def\vm{v_{\ms{max}}}
\def\bec{\begin{corollary}}
\def\enc{\end{corollary}}
\def\coloneqq{:=}
\def\subsetneq{\varsubsetneq}
\begin{document}

\begin{frontmatter}

    \title{Limit theory of sparse random geometric graphs in high dimensions}
    
    \author[inst1,inst2]{Gilles Bonnet}
    \author[inst3,inst4]{Christian Hirsch}
    \author[inst5]{Daniel Rosen}
    \author[inst1,inst2]{Daniel Willhalm\corref{cor1}} \ead{d.willhalm@rug.nl}

    \cortext[cor1]{Corresponding author}
    
    \affiliation[inst1]{organization={Bernoulli Institute, University of Groningen},
                addressline={Nijenborgh 9}, 
                city={Groningen},
                postcode={9747 AG}, 
                country={Netherlands}}
    
    \affiliation[inst2]{organization={CogniGron (Groningen Cognitive Systems and Materials Center)},
                addressline={Nijenborgh 4}, 
                city={Groningen},
                postcode={9747 AG}, 
                country={Netherlands}}
                
    \affiliation[inst3]{organization={Department of Mathematics, Aarhus University},
                addressline={Ny Munkegade 118}, 
                city={Aarhus C},
                postcode={8000}, 
                country={Denmark}}
    
    \affiliation[inst4]{organization={DIGIT Center, Aarhus University},
                addressline={Finlandsgade 22}, 
                city={Aarhus N},
                postcode={8200}, 
                country={Denmark}}
    
    \affiliation[inst5]{organization={Faculty of Mathematics, Ruhr University Bochum},
                city={Bochum},
                postcode={44780}, 
                country={Germany}}

    \begin{abstract}
        We study topological and geometric functionals of $l_\infty$-random geometric graphs on the high-dimensional torus in a sparse regime, where the expected number of neighbors decays exponentially in the dimension. More precisely, we establish moment asymptotics, functional central limit theorems and Poisson approximation theorems for certain functionals that are additive under disjoint unions of graphs. For instance, this includes
        simplex counts and Betti numbers of the Rips complex, as well as general subgraph counts of the random geometric graph. We also present multi-additive extensions that cover the case of persistent Betti numbers of the Rips complex.
    \end{abstract}
    
    \begin{keyword}
        Random geometric graph \sep High dimension \sep Functional central limit theorem \sep Poisson approximation \sep Betti numbers
        \MSC[2010] 60D05 \sep 55U10 \sep  60F05
    \end{keyword}

\end{frontmatter}

\linenumbers

%\maketitle

\section{Introduction}

%MOT
In the random geometric graph, two data points are linked by an edge if their distance does not exceed a chosen threshold. This network lies at the foundation of many advanced clustering methods such as DBSCAN \cite{dbscan}. Hence, asymptotic results on the structure of large random geometric graphs can provide key insights how such algorithms behave on large datasets. Due to the complexity of modern datasets, there is particularly pressing demand of such results in high dimension. This motivates our investigation of high dimensional random geometric graphs.

%SG
Besides this motivation from the application domain, there is also a vibrant research stream of stochastic geometry in high dimensions. This includes general limit results for random tessellations, Boolean models and random polytopes \cite{tessellationBaccelli, booleanBaccelli, gilles2,rsimp,hm}. 
For the random geometric graph, the investigations have so far concentrated on very particular functionals such as edge or clique counts \cite{cliq2,cliq3,cliq1,gry2,gryg}.  While they are certainly fundamental characteristics of a network, the rise of topological data analysis creates the urgent need to understand more refined quantities such as Betti numbers and persistent Betti numbers. In finite dimensions, \cite{crackle,thomas} recently derived central limit theorems (CLTs) and Poisson approximation theorems in the large-volume limit. However, all findings pertain to the model where the dimension remains fixed.

%RESULTS
To address these shortcomings, we develop a general framework for limit results of functionals on the $l_\ff$-random geometric graph in high dimensions. This includes a set of sufficient conditions for CLTs and Poisson approximation theorems (both scalar and functional) on \emph{additive statistics}, i.e., summary statistics that can be computed separately in each of the connected components. One of the key obstacles to arrive at such results in full generality stems from long-range correlations induced by large connected components spanning over macroscopic regions of the sampling window. However, we will not encounter this difficulty since our work will focus on the \emph{sparse regime} where the expected number of neighbors of a typical nodes vanishes in the limit.

%METH
On the methodological side, our main contributions rely on a simple but essential observation. When constructing the random geometric graph with respect to the $l_\ff$-norm, then two vertices of the graph are connected precisely when, on each of the $d$ coordinate axes, the distance between their projections is below the connectivity threshold of the graph. Hence, adding further links into the geometric graph incurs exponential costs so that in the contribution of components with superfluous edges is negligible. This observation makes it possible to derive precise expectation and variance asymptotics in the high-dimensional setting. 

%LIMIT
Once the expectation and variance asymptotics are established, we invoke Stein's method to derive the scalar CLT and Poisson approximation theorem. This blueprint was already successfully implemented in fixed dimensions in \cite{thomas}. The most challenging part in extending the results is caused by the need to control quantities that are no longer of constant order but grow at an exponential speed in the high-dimensional regime. To proceed to a functional CLT, the main challenge is to establish tightness. Here, we rely on the cumulant method, which was successfully implemented in fixed dimensions in  \cite{cyl}. Again, we note that the exponential growth of certain expressions is the main challenge in the high-dimensional regime. 

%Comment from Yogesh
To prove the CLT, we rely on dependency graphs as in \cite[Theorem 2.4]{penrose}. More recently, \cite{lastPeccatiSchulte} developed the Malliavin-Stein calculus to get bounds for a normal approximation. This could also be applied for our purposes. If one were to aim for quantitative normal approximation results, the Malliavin-Stein method would possibly have the advantage of giving better rates of convergence. However, since we do not consider convergence rates, we preferred the dependency graph method due to slightly less involved computations. 
 
%STRUCTURE
The rest of manuscript is organized as follows. First, in Section \ref{mmod_sec}, we introduce the model and state our main results. Next, Section \ref{ex_sec} provides examples for functionals covered by our framework. Finally, Sections \ref{add_proof_sec}, \ref{pat_sec}, \ref{add_mult_sec} and \ref{tight_sec} contain the proofs of the additive CLT, the Poisson approximation theorem, the multi-additive CLT, and the functional CLT, respectively.

\section{Model and main results}
\label{mmod_sec}

\subsection{Model definition}
\label{mod_sec}
%
%RIPS
%
Let $\Rips(\PP_d;s)$ denote the $l_\ff$-Gilbert graph with connectivity radius $s > 0$ constructed on a homogeneous Poisson point process in the torus $W_d := [0, b_d]^d/\sim$ with {intensity} $\la_d^d$. Loosely speaking, the cube $[0, b_d]^d$ is equipped with periodic boundary conditions. The edges of $\Rips(\PP_d;s)$ are the vertex pairs $\{x,y\}\su \PP_d$ with 
$$
|x-y| :=\| x-y\|_{\infty} 
:= \max_{i \le d}(|x_i - y_i|\wedge|x_i - y_i + b_n| \wedge|x_i - y_i-b_n|) \le s,$$
where $\wedge$ denotes the minimum, and where $x, y$ are considered as points in Euclidean space.
The Gilbert graph gives rise to an increasing family of graphs
$$\bigl(\Rips_d(t)\bigr)_{t \le 1} := \bigl(\Rips(\PP_d;t^{1/d})\bigr)_{t \le 1}.$$
The scaling $t^{1/d}$ will be essential for our results to hold. It can be motivated by the fact that the typical degree of this graph has expectation $ (2 \la_d)^d t $, and, in particular is proportional to $t$.

%
%RESTRICTION
%
Our main results concern the structure of the Gilbert graph in high dimensions, i.e., as $d \tff$. We focus on the \emph{sparse regime} where $\la_d\to 0$ and the graph is observed in a cubical sampling window .
In particular, we may assume $\lambda_d< 1/2$, so that the expected number of Poisson points in an $l_\infty$-unit ball is strictly less than $1$. This ensures that components of $\Rips(\PP_d;t^{1/d})$ are almost surely finite for any $t \le 1$. Henceforth, we set
$\Comp_d(t)
:= \{ G \su \Rips_d(t)\co\text{$G$ is a connected component} \}$  {and $|G|$ denotes the number of vertices of an arbitrary graph $G$}.

%
%BETTI
%
\subsection{Additive functionals}
\label{add_sec}
In our first two main results, Theorems \ref{fclt} and \ref{pois_thm} below,
we study the asymptotic behavior of nonnegative functionals on the $l_\infty$-Gilbert graph in high dimensions. That is, for a nonnegative functional $\aaa$ defined on isomorphism classes of abstract graphs, we investigate $$A_{d, t} := \aaa(\Rips_d(t))$$ as a stochastic process on $[0, 1]$ in the limit $d\tff$. To describe precisely the variance-scaling in our CLT, we introduce additional terminology. 

We assume that $\aaa$ is \emph{additive}, i.e., that $\aaa(G \cup G') = \aaa(G) + \aaa(G') $ for any disjoint graphs $G$ and $G'$. We highlight subgraph counts and Betti numbers of the Rips complex as two prominent examples for additive functionals, see Section \ref{add_ex_sec}. 
Then, writing $\mathbb{G}_k$ for the family of all connected graphs on $\{0, \dots, k\}$, we set 
$$\AA_k := \bigl\{G \in \mathbb{G}_k\co \aaa(G) \ne 0\bigr\}\quad \text{ and }\quad k_0 := \min\{k \ge 0 \co \AA_k \ne \es \}
.$$
A key step in the proof of the CLT will be to show that, for any $t\leq 1$, the expectation and variance of the functional $A_{d,t}$ are of order
\begin{equation} \label{eq:defrd}
    \r_d := |W_d| \la_d^{d(k_0 + 1)}v_{\ms{max}}^d.
\end{equation}
Here, $v_{\ms{max}} := \max_{G \in \AA_{k_0}}v(G)$, and, for $G \in \AA_{k}$ with $k \geq 1$,
$$v(G) 
:= \int_{\R^{k}}\one\{G \su \mc{G}_{1, k}(o, u_1, \dots, u_{k}; 1) \}\d ( u_1, \dots, u_{k}),$$
with $o := (0, \dots, 0) \in \R^d$, and where, for $ \xx = (x_0, \dots, x_k) \in \R^{d(k + 1)}$,
$$\RR(\xx; t):=\RRk(\xx; t) := \{ \{i, j\} \su\{0,\dots,k\}\co |x_i-x_j| \le t^{1/d} \}.$$
Here, to simplify the notation, we identify the graph with its edge set.
Note that the vertices of $\RR(\xx; t)$ are integers whereas the vertex set of $\Rips_d(t)$ is contained in $\R^d$. For $k_0 = 0$, we put $v(G) = 1$ and observe that
$$ v(G)^d = \int_{\R^{d k}}\one\{G \su \mc{G}_{d, k}(o, x_1, \dots, x_{k}; 1) \}\d ( x_1, \dots, x_{k}).$$

Before stating the moment asymptotics precisely, we first provide an intuition behind the quantity $\r_d$. To add a connected component consisting of $k + 1$ nodes in the sampling window, we first have to place one of the points inside the window and then insert $k$ further points at a distance of constant order. In expectation, this yields a contribution of order $|W_d|\la_d^{d(k + 1)}$. Hence, observing components with more than the minimal number of $k_0 + 1$ is exponentially unlikely and can be neglected asymptotically. Next, we observe that having an edge between two vertices in the $l_\infty$-Gilbert graph means that when considering the difference between the two vertices, then the absolute value of each of the $d$ coordinates is smaller than the connection threshold. 
Hence, putting additional edges incurs exponential costs. Therefore, configurations which do not realize the maximal value of $v$, and are exponentially unlikely.
We let 
$$\AA^{\ms m}_{k_0} := \bigl\{ G\in\AA_{k_0}\co v(G) = v_{\ms{max}}\bigr\}$$
be the set of configurations $G\in\AA_{k_0}$ such that $v(G) = v_{\ms{max}}$.

For the functional limit results, we consider $A_{d,t}$ as an element of the space of nonnegative càdlàg functions on $[0,1]$ endowed with the Skorokhod topology. We refer the reader to \cite[Section 12]{billingsley} for a detailed introduction of this space. For $(\alpha_d)_d$, $(\beta_d)_d$ nonnegative sequences we write $\alpha_d \sim \beta_d$ if $\lim_{d\to\ff}\alpha_d /\beta_d = 1$.
%FCLT %
\bet[CLT for additive functionals]
    \label{fclt}
     Let $\aaa$ be an additive nonnegative functional with $\aaa(G) \in e^{O(|G|)}$. Moreover, assume that $\la_d \to 0$ and $|W_d|^{1/d}\to \infty$.
    \been
        \im {\bf Moment asymptotics.} 
        Let $0 \le t \le t' \le 1$, then
        $$ \E[A_{d, t}] \sim \f{\r_dt^{k_0}}{(k_0 + 1)!} \sum_{G\in \AA^{\ms m}_{k_0}}\aaa(G) \quad \text{ and } \quad \Cov[A_{d,t'}, A_{d, t}] \sim \f{\r_dt^{k_0}}{((k_0 + 1)!)^2} \sum_{G \in \AA^{\ms m}_{k_0}} \aaa(G)^2.$$
        Moreover, assume that $\aaa$ is increasing. Then, there exists $c_{\ms{inc}}> 0 $ such that for every $d \ge 2$ and $0 \le t \le t' \le 1$ we have 
        $$\E[A_{d,t'} - A_{d, t}] \le c_{\ms{inc}}\r_d(t' - t)
        \quad \text{ and } \quad 
        \Var\bigl[A_{d,t'} - A_{d, t}\bigr] \le c_{\ms{inc}}\r_d(t' - t).$$
        \im {\bf Multivariate CLT.}
        If $\r_d^{1/d}= |W_d|^{1/d} \la_d^{k_0 + 1}v_{\ms{max}}\to\ff$, then,
        as $d \tff$, in the sense of finite-dimensional distributions, we have 
        $$\Var[A_{d,1}]^{-1/2}({A_{d, t} - \E[A_{d, t}]}) \Rightarrow B_{t^{k_0}} .$$
         where $(B_s)_{s \ge 0}$ is standard Brownian motion.%, and 
        \im {\bf Functional CLT.} Assume that $\r_d^{1/d} \to\ff$ and that $\aaa = \aaa^+ - \aaa^-$ with both $\aaa^+, \aaa^-$ increasing {nonnegative} functionals satisfying the growth condition {$\max(\aaa^+(G),\aaa^-(G)) \in e^{O(|G|)}$}.
        Then, as $d \tff$, as a process in $[0, 1]$ and with respect to the Skorokhod topology,
        $$\bigl(\Var[A_{d,1}]^{-1/2}({A_{d, t} - \E[A_{d, t}]})\bigr)_{t \le 1}\Rightarrow \bigl(B_{t^{k_0}}\bigr)_{t \le 1} .$$
    \enen
\ent

Finally, we prove a Poisson-approximation result in the spirit of \cite[Theorem 5.1]{thomas}, but with a different method. We show that, in the regime where the expectations $(\E A_{d,t})_{t\leq 1}$ converge, the process $(A_{d,t})_{t\leq 1}$ converges to a Poisson process.
\bet[Poisson approximation]
    \label{pois_thm}
    Let $\aaa$ be an additive nonnegative functional. Assume that $\la_d \to 0$ and that $\rho_d \to K>0$. Then, as $d \tff$, as a process in $[0, 1]$ and with respect to the Skorokhod topology,
    $$(A_{d, t})_{t\le 1}\Rightarrow \Bigl(\sum_{G \in \AA^{\ms m}_{k_0}} N^{(G)}_t\aaa(G)\Bigr)_{t \le 1},$$
    where $(N^{(G)}_t)_{t \le 1}$, $G \in \AA^{\ms m}_{k_0}$, are independent Poisson processes with expected value $ K t^{k_0} / (k_0 + 1)!$ at time $t$.
\ent
Note that as a corollary, Theorem \ref{pois_thm} also implies Poisson approximation for the finite-dimensional marginals of $(A_{d, t})_{t \le 1}$. Moreover,  a Poisson approximation result with a diverging Poisson parameter can also be a way to derive a CLT \cite[Theorem 3.10]{thoppe}. It appears that such an approach could establish a CLT in a regime where $\rho_d \to \infty$ and $\limsup_{d \to \infty}\rho_d^{1/d} \le c_0$ for a suitable $c_0 = c_0(\aaa) > 1$, see Remark \ref{rem:pclt}.

Note that most of the arguments can be extended to a setting where $W_d$ does not have periodic boundary conditions but is embedded in $\R^d$. However, our proof of the functional CLT needs that for increasing $\aaa$, the process $(A_{d, t})_{t\le 1}$ is increasing in $t$. It is at this point, that we rely on the periodic boundary conditions.

\subsection{Multi-additive functionals}
\label{pair_sec}

The functional CLT from Theorem \ref{fclt} allows to describe the evolution of $A_{d, t}$ as $t \in [0, 1]$. On the other hand, some additive functions already depend by construction on the \emph{joint} configuration of the Gilbert graph at a sequence of $\tb = (t_1, \dots, t_m)$ for $0 \le t_1 \le \cdots \le t_m \le1$. Hence, for an $m$-variate functional $\aaa$ on graphs, we put
\begin{align*}
    A_{d,\tb} := \aaa(\Rips_d(\tb)) ,\;
    \text{where }
    \Rips_d(\tb) := \bigl(\Rips_d(t_1) , \dots, \Rips_d(t_m) \bigr).
\end{align*}

For instance, while the subgraph count captures the number of subgraphs at a fixed $t \in [0, 1]$, the dynamic subgraph count can reflect also a temporal evolution by specifying the configuration of the subgraph at the time vector $\tb$. Moreover, for $m = 2$, persistent Betti numbers on the Rips complex provide another example of a multi-additive functional. We will return to these examples in detail in Section \ref{pair_ex_sec}.

%
%DOM
%

First, we specify what it means for a multivariate functional to be additive.
We proceed similarly to the univariate case by considering the connected components of its last argument.
More precisely, we say that $\aaa$ is \emph{multi-additive} if for every $\Rb = (G_1, \dots, G_m)$ with $G_1 \su \cdots \su G_m$ and $\Rb' = (G'_1, \dots, G'_m)$ with $G'_1 \su \cdots \su G'_m$ and $G_m \cap G_m' = \es$ we have 
$\aaa(\Rb \cup \Rb') = \aaa(\Rb) + \aaa(\Rb')$, where $\Rb \cup \Rb' := (G_1 \cup G'_1, \dots, G_m \cup G'_m)$.

In order to handle the additional complexities in the multivariate setting, we introduce a second condition. More precisely, we say that a nonnegative functional $\aaa$ is \emph{dominated} if there exists $\cdo > 0$ such that
$$ \aaa(G_1, \dots, G_m) \le \cdo \, \aaa(G_m, \dots, G_m) $$
holds for any increasing sequence $ G_1 \su \cdots \su G_m$ of $m$ graphs.
Analogously to the univariate setting, we let $\AA_k$ denote the family of all connected graphs $G$ on the vertex set $\{0, \dots, k\}$ with $\aaa(G, \dots, G) \ne 0$. Then, we define $k_0$ and $\r_d$ as in the univariate setting. 
For $(\alpha_d)_d$, $(\beta_d)_d$ nonnegative sequences we write $\alpha_d \asymp \beta_d$ if $\alpha_d \in O(\beta_d)$ and $\beta_d \in O(\alpha_d)$.

%FCLT
%
\bet[CLT for multi-additive functionals]
    \label{pair_fclt}
    Assume that $\la_d \to 0$ and that $\r_d\to\ff$ as $d\tff$. Further, we assume that the functional $\aaa$ is dominated, nonnegative and multi-additive such that $\aaa(G, \dots, G) \in e^{O(|G|)}$. Let $\tb = (t_1, \dots , t_m)$ with $0 \le t_1 \le \cdots \le t_m \le 1$. Then, as $d \tff$,
    \been
        \im {\bf Moment asymptotics.} 
        $ \E[A_{d,\tb}] \asymp \r_d$ and $\Var[A_{d,\tb}] \asymp \r_d.$
        \im {\bf CLT.}
        $({\Var[A_{d,\tb}]})^{-1/2}({A_{d,\tb} - \E[A_{d,\tb}]})$ converges  to a standard normal random variable.
    \enen
\ent

\begin{remark}
    We do not know whether the result in part 1 of Theorem \ref{pair_fclt} can be sharpened to give the convergence of $\rho_d^{-1}\E[A_{d,\tb}]$. Such a property would be needed to formulate functional central limit or Poisson approximation theorems for the multi-additive case.
\end{remark}

\section{Examples}
\label{ex_sec}

In Sections \ref{add_ex_sec} and \ref{pair_ex_sec}, we provide specific examples for uni- and multivariate functionals satisfying the conditions of Theorems \ref{fclt} and \ref{pair_fclt}, respectively.

\subsection{Additive functionals}
\label{add_ex_sec}

First, we present subgraph counts and Betti numbers as specific examples of additive functionals covered by Theorem \ref{fclt}.
In the following, the clique complex of a graph $G$ is the simplicial complex with vertex set given by the vertex set of $ G$, and where the $q$-simplices consist of all $(q + 1)$-tuples of vertices that form a $(q+1)$-clique. The most prominent example of a clique complex is the Rips complex, which is associated to the geometric graph. 
%
%SUBGOMPLEX
%
\bee[Subgraph counts]
    Let $G_0$ be a fixed graph and define
    $$\aaa(G) := \#\{G' \su G\co G' \cong G_0\}$$
    as the number of subgraphs of $G$ that are isomorphic to $G_0$. Moreover, $\aaa(G)$ satisfies the growth condition $\aaa(G) \le |G|^{|G_0|}$ since any subgraph isomorphic to $G_0$ is determined by choosing the vertices in $G$ that correspond to the vertices in $G_0$. Hence, $\aaa(\cdot)$ satisfies the conditions of Theorem \ref{fclt}. Note also that the $q$-simplex count in the clique complex is a special case with $G_0$ chosen as the complete graph on $q + 1$ vertices.
    
    We next extend the above argumentation to the number
    $$\aaa_{\ms i}(G) := \#\{G' \su_{\ms i} G\co G' \cong G_0\}$$
    of \emph{induced} subgraphs of $G'$ of $G$ that are isomorphic to $G_0$. Here, an induced subgraph $G'$ of $G$ needs to satisfy the constraint that any edge in $G$ whose vertices are contained in $G'$ is also present in $G'$. In particular, for both, the subgraph and the induced subgraph count, we have $k_0 = |G_0| - 1$. 
\ene
%
%BETTI
%
\bee[Betti numbers]
    Let $q \ge 0$ and define 
    $$\aaa(G) := \dim(Z_q(\widehat G)) - \dim(B_q(\widehat G))$$
    to be the $q$th Betti number of the clique complex $\widehat G$ on the graph $G$. Here the increasing functionals $Z_q(\widehat G)$ and $B_q(\widehat G)$ denote the $q$th cycle and boundary spaces of the clique complex, respectively. We refer the reader to \cite{edHar} for a general introduction to simplicial complexes and simplicial homology.
    Then, $\aaa(\cdot)$ satisfies the conditions of Theorem~\ref{fclt} since $0\leq \aaa(G) \leq \dim(Z_q(\widehat G)) \le |G|^q$. As noted in \cite[Example 3.9]{curto2015clique}, we have $k_{0}+1 = 2(q+1)$. In fact, by \cite[Lemma 4.4]{Khale}, $\widehat G$ is isomorphic to the cross-polytope for any $ G \in \AA_{k_0}$.
\ene

%
%PAIR
%
\subsection{Multi-additive functionals}
\label{pair_ex_sec}
Second, we present linear combinations of additive functionals, dynamic subgraph counts and persistent Betti numbers as specific examples of multi-additive functionals covered by Theorem \ref{pair_fclt}.

\bee[Linear combinations of univariate functionals]
    Define
    $$\aaa(G_1, \dots, G_m) := \a_1 \aaa'_1(G_1) + \cdots + \a_m \aaa'_m(G_m),$$
    where $\a_1, \dots, \a_m \ge 0$ and where the $\aaa'_i$ are nonnegative additive functionals. If $\aaa'_i$ are increasing, then $\aaa$ is a dominated multi-additive functional. Moreover, growth bounds on the $\aaa'_i$ translate immediately into growth bounds on $\aaa$. 
\ene

\bee[Dynamic subgraph count]
    Let $\CCC_0 = (G_{0, 1}, \dots, G_{0, m})$ be a fixed sequence of graphs and define
    $$\aaa(G_1, \dots, G_m) := \#\bigl\{G'_1 \su \cdots \su G'_m\co \text{$G'_i \su G_i$ and $G'_i \cong G_{0, i}$ for all $i \le m$}\bigr\}.$$
    Hence, for $\tb = (t_1, \dots, t_m)$, we may think of $(G_{0, 1}, \dots, G_{0, m})$ as a specific motif to be detected in an evolving network. Then, $A_{d,\tb}$ counts the number of times that this motif is found in the filtration $(\Rips_d(t))_{t \le 1}$. By construction, $\aaa$ is dominated, multi-additive and satisfies the growth condition $\aaa(G, \dots, G) \le |G|^{m\cdot|G_{0, m}|}$. We note that the idea of subgraph counts could also be applied to subcomplex counts.
\ene

%
%PERSISTENT BETTI
%
\bee[Persistent Betti numbers]
    Let $q \ge 0$ and define $\aaa(G, G')$ to be the $q$th persistent Betti number associated with the clique complexes of the graphs $G \su G'$. That is, 
    $$\aaa(G, G') := \dim(Z_q(\widehat G)) -\dim( Z_q(\widehat G) \cap B_q(\widehat G')) .$$
    We again refer the reader to \cite{edHar} for more details.
    Then, $\aaa(\cdot, \cdot)$ is dominated with constant $c= 1$, i.e., $\aaa(G, G') \le \aaa(G', G')$ for $G\su G'$. Moreover, $\aaa(G, G') \le |G'|^q$. 
\ene

\section{Proof of Theorem \ref{fclt}, parts 1 and 2}
\label{add_proof_sec}
%
%DEGOMP
%
In Sections \ref{var_sec} and \ref{mult_sec}, we prove parts 1 and 2 of Theorem \ref{fclt}, where in broad strokes we follow the blueprint from \cite{thomas}. The proof of part 3, i.e., of the functional CLT, requires the introduction of substantial machinery and will therefore be deferred to Section \ref{tight_sec}.

\subsection{Expectation and covariance asymptotics}
\label{var_sec}

For the proof of the expectation and covariance asymptotics, we will show the following lemma which is a consequence of the Mecke formula \cite[Theorem 4.7]{poisBook}. For $\xx = (x_1, \dots, x_k) \in W_d^{k}$, $\xx' =(x_1', \dots, x_{k'}') \in W_d^{k'}$ and $t\le 1$, we set $B_t(\xx) := \{y \in W_d\co \min_{i\le k}|y - x_i| \le t^{1/d} \}$, $ \overline W^k_{d, t}:=t^{-1/d}W_d^{k + 1}$ equipped with periodic boundary conditions and $\dist(\xx, \xx') := \min_{i\le k, j \le k'} |x_i - x_j'|$.
\bel[Moment computations]
    \label{lem:Meckeapplied}
    Let $\aaa$ be an additive nonnegative univariate functional. Let $d\ge1$, and $0 \le t\le 1$ and consider the random variable $A_{d, t} = \aaa\bigl(\Rips_d(t) \bigr)$. Moreover, let $A_{d, t, k}$ be the restriction to components of size $k + 1$. Then,
    \been
	\im
        $ \E[A_{d, t}] =\sum_{k\ge0} \la_d^{d (k +1)}({(k+1)!})^{-1}\sum_{G  \in \AA_k} \aaa(G) g_{1, t}(G) ,$
	where
	\begin{align*}
            g_{1, t}(G)
		  &:= t^{ k + 1}\int_{\overline W^k_{d, t}} e^{-t\la_d^d \, |B_1(\xx)|}\one\{G  = \RR(\xx; 1) \} \d \xx.
	\end{align*}
	\im
        $ \Var[A_{d, t}] = \sum_{k,k'\ge0} ({(k+1)!\, (k'+1)!})^{-1} \sum_{\substack{G\in\AA_k \\G'\in\AA_{k'}}} \aaa(G) \aaa(G') w_t(G, G')$, where with $k'' := k+ k'$,
	    $$w_t(G,G') := \one\{G=G'\} \la_d^{d (k +1)}g_{1, t}(G)+ \la_d^{d(k'' + 2)}(g_{2, t}( G,G') - g_{1, t}(G)g_{1, t}(G')),$$
	    and
	    \begin{align*}
            g_{2, t}(G,G')
		  &:=t^{k'' + 2}\int_{\overline W^k_{d, t}} \! \int_{\overline W^{k'}_{d, t}} e^{-t\la_d^d |B_1((\xx, \xx'))|} \one\{ \dist(\xx , \xx')> 1 \}
		  \\&\qquad\times\one\bigl\{G = \RR(\xx; 1), G' = \RR(\xx'; 1) \bigr\} \d\xx'\d \xx,
		\end{align*}
		where $(\xx, \xx') := (x_1,\dots,x_k,x_1',\dots,x_{k'}')$. 
	\im
        $ \Cov[A_{d, t, k},A_{d, t', k}] = ({(k+1)! })^{-2}\sum_{\substack{G, G'\in\AA_k}} \aaa(G) \aaa(G')w_{t, t'}(G,G')$ for $t \le t'$,
	    where we set
	    \begin{align*} 
            w_{t, t'}(G,G') &:= \la_d^{d (k + 1)} g_{1,t, t'}(G, G')+ \la_d^{d (2k+ 2)}(g_{2, t, t'}( G,G') -g_{1, t}(G)g_{1, t'}(G')),
            \intertext{and}
            g_{1, t, t'}(G, G')
            &:=(t')^{k + 1} \int_{ \overline W^k_{d, t'}} e^{-t'\la_d^d \, |B_1(\xx)|}\one\{G = \RR(\xx; t/t'), G' =\RR(\xx; 1) \} \d \xx,
            \intertext{and}
            g_{2, t, t'}(G,G')
            &:=(t')^{2k + 2 } \int_{ \overline W^k_{d, t'}}\int_{ \overline W^{k}_{d, t'}} e^{-t'\la_d^d |B_{t/t'}(\xx) \cup B_1(\xx')|} \one\{ \dist(\xx , \xx') > 1 \}
            \\&\qquad\times \one\bigl\{G = \RR(\xx; t/t'), G' = \RR(\xx'; 1) \bigr\} \d \xx' \d \xx .
	    \end{align*}
    \enen
\enl
\begin{remark} \label{remark:substitution}
    In some instances, it is more practical to work with the following representations of the functions $g_{i,t}$ and $g_{i,t,t'}$, where the first variable is ``fixed'' at the origin.
    For $g_{1,t}$ this is obtained by performing a substitution $(z_0,\dots,z_k) = (x_0,x_1-x_0,\dots,x_k-x_0)$, and then integrating the variable $z_0$ which produces a coefficient $ |W_d|/t$.
    The other cases are derived similarly.
    \begin{align*}
        g_{1, t}(G)
        &= t^{ k}|W_d| \int_{W_d^{k}} e^{-t\la_d^d \, |B_1({(o,\xx)})|}\one\{G = \RR((o,\xx); 1) \} \d \xx,
        \\ g_{2, t}(G,G')
        &= t^{k'' + 1 }|W_d|\int_{W_d^{k }  }\int_{\overline W^{k'}_{d, t}} e^{-t\la_d^d |B_1({((o,\xx), \xx')})|} \one\{ \dist((o,\xx) , \xx') > 1 \}
        \\&\qquad\times \one\bigl\{G = \RR((o,\xx); 1), G' = \RR(\xx'; 1) \bigr\} \d \xx'\d \xx,
        \\ g_{1,t, t'}(G, G')
        &= (t')^{k } |W_d|\int_{W_d^{k }  } e^{-t'\la_d^d \, |B_1({(o,\xx)})|}\one\{G= \RR((o, \xx); t/t'), G' =\RR((o, \xx); 1) \} \d \xx,
        \\ g_{2, t, t'}(G,G')
        &= (t')^{2k + 1 } |W_d|\int_{W_d^{k }  }\int_{\overline W^{k}_{d, t'}} e^{-t'\la_d^d |B_{t/t'}{((o,\xx)) \cup B_1(\xx')}|} \one\{ \dist((o,\xx) , \xx') > 1 \}
        \\&\qquad\times \one\bigl\{G = \RR((o,\xx); t/t'), G' = \RR(\xx'; 1) \bigr\} \d \xx'\d\xx.
    \end{align*}
\end{remark}

Henceforth, we set $s_d(t) := t^{1/d}$.
\bep[Proof of Lemma \ref{lem:Meckeapplied}, part 1]
    First, by additivity,
    $$A_{d, t} = \sum_{k\ge0} \f1{(k+1)!} \sum_{G\in\AA_k} \aaa(G) \sum_{\xx\in\PP^{k+1}_{\ne}} \one\{ \Rips(\xx;s_d(t)) \in \Comp_d( t)\}   \one\{G=\RR(\xx;t)\}, $$
    where the inner sum is taken over all $k + 1$ tuples of pairwise distinct Poisson points.
    Hence, by the Mecke formula \cite[Theorem 4.7]{poisBook},
    \begin{align*}
	    \E[A_{d, t}]
	    &= \sum_{k\ge0}\f{{\la_d^{d (k+1)}}}{(k+1)!} \sum_{G\in\AA_k} \aaa(G) \int_{W_d^{k + 1}} \P\bigl(\Rips(\xx;s_d(t)) \in \Comp_d(\xx; t) \bigr)   \one\{G=\RR(\xx;t)\} \d \xx,
    \end{align*}
    where $\Comp_d(\xx; t)$ is the family of all connected components of $\Rips(\PP_d \cup \xx;s_d(t))$.
    Thus, by the definition of the Gilbert graph and the void probabilities of the Poisson process $\PP_d$, 
    \begin{align*}
	    \E[A_{d, t}]
	    &= \sum_{k\ge0} \f{{\la_d^{d (k+1)}}}{(k+1)!} \sum_{G\in\AA_k} \aaa(G) \int_{W_d^{k + 1}} e^{-\la_d^d |B_{t}(\xx)|} \one\{G=\RR(\xx;t)\} \d \xx .
    \end{align*}
  Implementing the substitution $\zz = \xx/s_d(t) $
  yields the claimed representation of $\E[A_{d, t}]$.
\enp

\bep[Proof of Lemma \ref{lem:Meckeapplied}, parts 2 and 3]
    For the variance, we first notice that $A^2_{d,t}$ can be represented as
    \begin{align*}
        &\sum_{k,k'\ge 0} \f1{(k+1)! \, (k'+1)!} \sum_{\xx\in\PP^{k+1}_{\ne}} \sum_{\xx'\in\PP^{k'+1}_{\ne}} \one\{ \xx \equiv \xx' \} \one\bigl\{ \Rips(\xx;s_d(t)) \in \Comp_d(\xx;t) \bigr\} \aaa(\RR(\xx;t))^2
        \\& + \one\{ \xx \not\equiv \xx' \} \one\{ \Rips(\xx;s_d(t))\in \Comp_d(\xx;t), \Rips(\xx';s_d(t))\in \Comp_d(\xx';t) \} \aaa(\RR(\xx;t)) \aaa(\RR(\xx';t)) ,
    \end{align*}
    where we write $ \xx \equiv \xx'$ for $ \{ x_1 , \dots ,x_k \} = \{ x_1' , \dots ,x_k'\} $.
    Therefore, proceeding as for the expectation of $A_{d, t}$,
    \begin{align*}
        \E[A^2_t]
        &= \sum_{k,k'\ge0} \f1{(k+1)! \, (k'+1)!} \sum_{G\in\AA_k} \sum_{G'\in\AA_{k'}} \aaa(G) \aaa(G') \bigl(\one\{G=G'\} \la_d^{d (k +1)} g_{1, t}(G)+ \la_d^{d (k'' + 2)}g_{2, t}( G,G')\bigr),
    \end{align*}
    and subtracting the expression found for $(\E[A_{d, t}])^2$ gives the claimed expression for the variance. For the covariance, we may proceed along the same lines. To avoid redundancy, we omit the detailed derivation.
\enp

After having established general first- and second-moment formulas, we can now proceed in the vein of \cite[Proposition 6.1 and Theorem 4.1]{thomas} to complete the proof of part 1 of Theorem \ref{fclt}.
We decompose the proof of the theorem into the four proofs below: first, expectation asymptotics; second, uniform bound on the increment's expectation; third, variance asymptotic; fourth, uniform bound on the increment variance.
\bep[Proof of Theorem \ref{fclt}, part 1, expectation]
    First, by Lemma \ref{lem:Meckeapplied},
    \begin{equation}\label{eq:expected_ad}
    \E[A_{d, t}] =\sum_{k\ge0} \f{\la_d^{d (k+1)} }{(k+1)!}\sum_{G \in \AA_k} \aaa(G) g_{1, t}(G).
    \end{equation}
    For brevity, let
    $ S_k := \la_d^{d (k +1)}({(k+1)!})^{-1} \sum_{G \in \mc A_k} \aaa(G) g_{1, t}(G) $
    denote the $k$th summand in \eqref{eq:expected_ad}. We wish to show that the term $S_{k_0}$ dominates the sum. We begin by estimating $g_{1, t}(G)$ for any $G$ with $|G| = k_0+1$. Note that $\la_d^d \, |B_1({(o,\xx)})| \le \la_d^d \, (k_0+1) |B_1(o)| = \la_d^d \, (k_0+1) 2^d \to 0$ because of the sparsity assumption $\la_d \to 0$.
    Therefore, using the representation of $g_{1,t}$ given in Remark \ref{remark:substitution}, we get
    \begin{align}
        \label{bij_eq}
        g_{1, t}(G)
	    & = e^{-\e_{d,G}} t^{k_0}|W_d| \int_{W_d^{k_0 }  } \one\Big\{G = \RR((o,\xx); 1) \Big\} \d \xx,
        \quad 0\le \e_{d,G} \le (k_0+1)2^d \la_d^d.
    \end{align}

    We will see now that as $d \tff$, the equality sign in the indicator may be replaced by an inclusion up to negligible terms.
    Indeed,
    \begin{align*}
	    \one\big\{G \subsetneq \RR((o,\xx); 1)\big\}
	    & = \one\big\{G \subseteq \RR((o,\xx); 1)\big\} \Big( 1 - \prod_{\{i, j\}\not\in E(G)} \one\{ \norm{x_i-x_j} > 1 \} \Big) ,
    \end{align*}
    where $E(G)$ denotes the edge set of $G$ and we agree that $x_0 = o$. But since
    \begin{align*}
        1-\prod_{\{i, j\}\not\in E(G)} \one\{ \norm{x_i-x_j} > 1 \}
	    & = \one \{ \min_{\{i, j\}\not\in E(G)} \norm{x_i-x_j} \le 1 \}
        \le \sum_{\{i, j\}\not\in E(G)} \one \{\norm{x_i-x_j} \le 1 \} ,
    \end{align*}
    we get that
    \begin{align} \label{subsetneq}
	    \int_{W_d^{k_0}  }\hspace{-.1cm} \one\bigl\{ G \subsetneq \RR\bigl((o,\xx); 1\bigr)\bigr\}\d\xx
	    \le\hspace{-.1cm} \sum_{\{i, j\}\not\in E(G)} \int_{W_d^{k_0 }  } \prod_{(i',j')\in E(G)\cup\{\{i, j\}\}} \hspace{-.1cm}\one\{ \norm{x_{i'}-x_{j'}} \le 1 \}\d\xx
        =\sum_{e\not\in E(G)} v(G\cup e)^d.
    \end{align}
    Now, note that, again with the convention $x_0=o$, we have
    \begin{align*}
        v(G)^d = \int_{W_d^{k_0 }  } \prod_{\{i, j\}\in E(G)} \one\{ \norm{x_i-x_j} \le 1 \} \d \xx,
    \end{align*}
    and set
    $\a(G) := \max \left\{ v(G\cup e)/{v(G)} : e \not\in E(G) \right\} $.
    Since $\a(G)<1$ for any $G$, we thus get that
    \begin{align*}
        \int_{ W_d^{k_0}} \one\bigl\{G = \RR\bigl((o,\xx); 1\bigr) \bigr\} \d \xx
        = v(G)^d (1-\e'_{d,G}),\quad
        0\le \e'_{d,G} \le k_0^2 \a(G)^d.
    \end{align*}
    Hence, we obtain from \eqref{bij_eq} that
    $$g_{1, t}(G)
    = e^{-\e_{d,G}} (1-\e'_{d,G}) t^{k_0} |W_d| v(G)^d .$$
    Recalling that $ \r_d = |W_d| \la_d^{d (k_0 + 1)}v_{\ms{max}}^d$, we thus have
    \begin{equation}\label{eq:S_k_0}
	    S_{k_0} 
        \sim \frac{\la_d^{d (k_0 +1)} |W_d| t^{k_0} }{(k_0+1)!} \sum_{G \in \mc A_{k_0}} \aaa(G) v(G)^d
        \sim \frac{\r_d t^{k_0}}{(k_0+1)!}\sum_{G \in \AA_{k_0}^{\ms{m}}}\aaa(G).
    \end{equation}

    %
    %HIGER ORD
    %
    Next, we bound the sum $\sum_{k > k_0} S_k$. For any $G \in \mc A_k$ we use the assumption that $\aaa(G) \le e^{ck}$ for some $c>0$, and the fact that $G$ contains some spanning tree, to get the bound
    $ S_k \le \frac{\la_d^{d (k +1)} e^{ck}}{(k+1)!} \sum_{T \in \mc T_k} \sum_{\substack{G \in \mc A_k\\ G \supseteq T}} g_{1, t}(G) $,
    where we denote by $\mc T_k$ the set of all trees with vertices $\{0, 1, \dots, k\}$.
    Now, we bound the inner sum, for any $T \in \mc T_k$.
    \begin{align*}
        \sum_{\substack{G \in \mc A_k\\ G \supseteq T}} g_{1, t}(G) &\le t^{k}|W_d| \int_{ W_d^{k}} \sum_{\substack{G \in \mc A_k\\ G \supseteq T}} \one\{G = \RR((o,\xx); 1) \} \d \xx
        = |W_d| \int_{W_d^{k }  }\one\{T \su \RR((o,\xx);  1) \}  \d \xx.
    \end{align*}
    Since the last integral is equal to $2^{dk}$, for any tree $T \in \mc T_k$, we may invoke the Cayley formula $\# \mc T_k = (k+1)^{k-1}$ and Stirling's approximation in order to arrive at
    \begin{align}
	    \label{eq:stirling}
	    S_k &\le \frac{\la_d^{d (k +1)} e^{ck}}{(k+1)!} \, |W_d| 2^{dk} (k+1)^{k-1} \le [e(2\la_d)^d]^k \, e^{ck} \, e |W_d|\la_d^d
	    \le M^{d k} \la_d^{d (k +1)}|W_d|
    \end{align}
    for some constant $M$, independent of $d$. Thus,
    \begin{align}
        \label{eq:S_k_rest}
        \sum_{k > k_0} S_{k}\le & |W_d|\la_d^d\sum_{k > k_0} (M^d \la^d)^k = \, \f{|W_d|\la_d^d (M^d \la^d)^{k_0 + 1}}{1-(M^d t\la_d^d)}
	    \le 2M^{k_0 d+ d}d |W_d| \la_d^{d k_0+2d}.
    \end{align}
    Hence, comparing \eqref{eq:S_k_0} and \eqref{eq:S_k_rest} shows that
    $ \E[A_{d, t}] \sim \r_d t^{k_0}((k_0+1)!)^{-1}\sum_{G \in \AA_{k_0}^{\ms m}}\aaa(G).$
\enp

\bep[Proof of Theorem \ref{fclt}, part 1, {uniform expectation bound for increments}]
    Let $E := [t, t']$.
    Similarly as in Lemma \ref{lem:Meckeapplied}, we decompose $A_{d, E} := A_{d,t'} - A_{d,t}$ as $A_{d, E} = \sum_{k \ge k_0} A_{d, E, k}$, where 
    $$ 
    A_{d, E, k}:= \f1{(k + 1)!}\sum_{\xx\in\PP^{k+1}_{\ne}}\one\{ \Rips(\xx;s_d(t')) \in \Comp_d( t')\}(\aaa(\RR(\xx;t')) - \aaa(\RR(\xx;t))).
    $$
    Then, we bound separately the contributions from $\E[A_{d, E, k_0}]$ and from $\E[A_{d, E, k}]$ with $k \ge k_0 + 1$. First, for $k = k_0$, we write 
    $$
        A_{d, E, k_0}= \sum_{\xx\in\PP^{k_0+1}_{\ne}}\one\{ \Rips(\xx;s_d(t')) \in \Comp_d( t')\} \, (h(\xx,t') - h(\xx,t) ),
    $$
    where $h(\xx,t) := \aaa(\RR(\xx;t)) \one\{\RR(\xx;t) \text{ is connected}\}$. Writing $|E|_{(k)} := (t')^k - t^k$ and 
    $s_k := \int_{ W_d^{k}}h((o,\xx),1) \d \xx$ gives that
    \begin{align*}
        \E[A_{d, E, k_0}^{}] &= |W_d|\la_d^{d (k_0 + 1)} \int_{W_d^{k_0}}e^{-\la_d^d \, |B_{t'}((o, \xx))|} (h(\xx,t') - h(\xx,t) ) \d \xx\\
        &\le ((t')^{ k_0} - t^k_0)|W_d|\la_d^{d (k_0 + 1)} \int_{W_d^{k_0}} h((o,\xx),1) \d \xx\\
        &= |W_d|\la_d^{d (k_0 + 1)}|E|_{(k_0)}s_{k_0}.
    \end{align*}
    Moreover, 
    $s_{k_0} \le \sum_{G \in\AA_{k_0}^{\ms m}} \aaa(G)\vm^d \in O(\vm^d)
    $
    implies that $\E[A_{d, E, k_0}^{}] \in O(\r_d|E|)$. 
    
    Next, consider the case where $k \ge k_0 + 1$. Then, we note that if 
    $\xx \in \PP^{k + 1}$ contributes to $A_{d, E, k}$, then there exists 
    a spanning tree $T\in \mc T_k$ such that $T \su \Rips(\xx; s_d(t'))$ and one of the spanning tree edges in $\Rips(\xx; s_d(t'))$ has  a length in $[s_d(t), s_d(t')]$. Therefore, proceeding as in of Theorem \ref{fclt}, we obtain that
    $ \E[A_{d, E, k}] \le M^{dk} |E|_{(k)}|W_d|\la_d^{d(k + 1)}.$
    Moreover, as in \eqref{eq:stirling} we can use that $\RR((o, \xx); 1)$ contains a spanning tree to obtain that $s_k \le  \, c^{d k}$ for a suitable $c > 0$.
    Therefore, 
    \begin{align*}
        \sum_{k \ge k_0 + 1} \E\big[A_{d, E, k}\big] 
        &\le  \f{\r_d}{\vm^d}\sum_{k \ge k_0 + 1} \f{M^{dk}|E|_{(k)} \la_d^{d(k - k_0)} s_k}{(k + 1)!}
        \le \sum_{k \ge k_0 + 1} \frac{|E|_{(k)} \la_d^{d (k - k_0)} (cM)^{d k}}{\vm^{d}} .
    \end{align*}

    Now, we bound the right-hand side, which we henceforth denote as $s_{E}$. 
    Recalling that $|E|_{(k)} = (t')^k-t^k$, we recognize that $s_E$ is the difference of two geometric series. This leads to
    $$s_E =\big(\la_d(cM) ^{k_0 + 1}/ \vm \big)^d \Bigl(\f{(t')^{k_0 + 1}}{1 - t'(cM\la_d)^d} - \f{t^{k_0 + 1}}{1 - t(cM\la_d)^d}\Bigr).$$
    Since the factor on the right-hand side is of order $O(|E|)$, we have $\sum_{k \ge k_0 + 1}\E[A_{d, E, k}^{}]\in O(\r_d |E|)$.    
\enp

\bep[Proof of Theorem \ref{fclt}, part 1, covariance]
    We consider the decomposition $A_{d, t} = A_{d, t, k_0} + A_{d, t, > k_0}$, where $A_{d, t, k_0}$ gathers the contributions of size $k_0$. Then,
    $$\Cov[A_{d, t}, A_{d, t'}] = \Cov[A_{d, t, k_0}, A_{d, t', k_0}] + \Cov[A_{d, t, > k_0},A_{d, t', k_0} ] + \Cov[A_{d, t}, A_{d, t', > k_0}].$$
    We bound the three expressions separately, starting with the first one.
    By Lemma \ref{lem:Meckeapplied}, part 3, we have $ \Cov [A_{d, t, k_0}, A_{d, t', k_0}] =((k_0+1)!)^{-2}( S_= + S_{\ne})$, where $S_= := \la_d^{d (k_0+1)}\sum_{G, G'\in\AA_{k_0}} \aaa(G)\aaa(G') g_{1, t, t'}(G, G')$ and
    \begin{align*}
	    S_{\ne} : = \la_d^{2 d (k_0+1)} \sum_{G,G'\in\AA_{k_0}} \aaa(G) \aaa(G') \, \bigl( g_{2, t, t'}( G, G') - g_{1, t}(G)g_{1, t'}(G')\bigr).
    \end{align*}
    The estimation of the term $S_=$ is almost identical to the computations in \eqref{eq:S_k_0} above, and yields that
    \begin{equation}\label{eq:bound_S_eq}
	    S_= \sim \r_d t^{k_0}\sum_{G \in \AA_{k_0}^{\ms m}} \aaa(G)^2 .
    \end{equation}
    Let us next see that $S_{\ne}$ is negligible in comparison with $S_=$. We have 
    \begin{align*}
	    g_{2, t, t'}( G, G') - g_{1, t}(G)g_{1, t'}(G')
	    &= (t')^{2k_0 + 2}\int_{\overline W^k_{d, t'}}\int_{\overline W^{k}_{d, t'}} \one\bigl\{G = \RR(\xx; t/t'), G' = \RR(\xx'; 1) \bigr\} \\
	    &\quad\! \times\hspace{-4pt}\Bigl[e^{-t'\la_d^d |B_{t/t'}(\xx) \cup B_1(\xx')|} \one\{ \dist(\xx,\xx')>1 \}- e^{-t'\la_d^d \, (|B_{t/t'}(\xx)| + |B_1(\xx')|)} \Bigr] \hspace{-.8235pt} \!\d \xx'\! \d \xx .
    \end{align*}
    Note that the term in the square brackets lies in $[-1, 1]$ and vanishes when the $l_\ff$-distance between $x_0$ and $x_0'$ is more than $2k_0 + 2$. Hence, noting that $t' \le 1$ and that the volume of the set $\{y \in \R^d\co |y - x_0| \le 2k_0 + 2\}$ can be bounded above by $c_0^d$ for some suitable $c_0 > 0$,
    \begin{align*}
        \bigl| g_{2, t, t'}( G, G') - g_{1,t}(G)g_{1, t'}(G') \bigr|
        &\le (t')^{2k_0 + 2} \int_{\overline W^{k_0}_{d, t'}} \int_{\xx'\co \norm{x_0' - x_0} \le 2k_0+ 2}\hspace{-0.6cm} \one\bigl\{G = \RR(\xx; t/t'), G' = \RR(\xx'; 1) \} \d \xx'\d\xx \\
        & \le c_0^{d} |W_d| \int_{\R^{d k_0}} \one\bigl\{G = \RR((0,\xx); t/t') \bigr\} \d \xx \int_{\R^{d k_0}} \one\bigl\{G' = \RR((0,\xx'); 1) \bigr\} \d \xx'.
    \end{align*}
    Thus, noting that we can just ignore $t/t'$ because it is less than 1,
    \begin{align}
    \label{eq:s0c}
        |S_{\ne}| \le c_0^d \la_d^{2 d (k_0 + 1)} |W_d|\Bigl(\sum_{G\in\AA_{k_0}}v(G)^d\Bigr)^2 .
    \end{align}
    Finally, comparing the latter expression with  \eqref{eq:bound_S_eq} gives that
    \begin{align}
        \label{covad_eq}
        \Cov [A_{d, t, k_0}, A_{d, t', k_0}]
        \sim \frac{\r_d  t^{k_0}}{((k_0+1)!)^{2}}\sum_{G \in \AA_{k_0}^{ \ms m}}\aaa(G)^2.
    \end{align}
Next we show that $\Cov[ A_{d, t, > k_0},A_{d, t', k_0}]$ is negligible compared to $\Cov[ A_{d, t, k_0},A_{d, t', k_0}]$. 
Here, by the Cauchy-Schwarz inequality,
    $\Cov[A_{d, t, > k_0},A_{d, t', k_0} ]^2\le \Var
    [A_{d, t, > k_0}] \Var[A_{d, t', k_0}]$. Note that \eqref{covad_eq} applied with $t=t'$ gives that $\Var[A_{d, t', k_0}]$ is of order $\rho_d$, and thus we need to show that
    \begin{align}
        \label{var_low_eq}
        \Var\bigl[A_{d, t, >k_0}\bigr] \in o\bigl(\r_d\bigr).
    \end{align}
    Proceeding similarly as for the first term, we rewrite $\Var[A_{d, t, > k_0}] = S_= + S_{\ne}$, where now
    \begin{align*}
        S_{\ne} & := \sum_{k,k'\ge k_0 + 1} \f{\la_d^{d (k+k'+1)}}{ (k+1)! \, (k'+1)!}\sum_{G\in\AA_k} \sum_{G'\in\AA_{k'}} \aaa(G) \aaa(G') \bigl( g_{2, t, t}(G, G') - g_{1, t}(G)g_{1, t}(G')\bigr),
    \end{align*}
    and $ S_= := \sum_{k \ge k_0 + 1} \la_d^{d (k+1)}((k+1)!)^{-1}\sum_{G\in\AA_k} \aaa(G)^2 g_{1 , t}(G)$.
    From this point on, we again argue as in \eqref{eq:S_k_rest} except that $k_0$ is replaced by $k_0 + 1$. In particular,
    \begin{align}
        \label{sene_eq}
        S_= \sim \r_d\la_d^d\sum_{G \in \AA_{k_0 + 1}} t^{k_0 + 1}\aaa(G)^2 \quad \text{ and }\quad
	|S_{\ne}| \le (M^d t' )^{2k_0 + 2} \r_d\la_d^{d k_0 + d} (1 + o(1)).
    \end{align}
    Hence, we arrive at the asserted $\Var[A_{d, t, > k_0}] \in o\bigl(\r_d\bigr)$. Finally we conclude the proof by observing that with similar argument $\Cov[ A_{d, t},A_{d, t', k_0}]$ is also negligible.
\enp

\bep[Proof of Theorem \ref{fclt}, part 1, {uniform variance bound for the increment}]
    As in the expectation bound, we rely on the decomposition $A_{d, E} = A_{d, E, k_0} + \sum_{k \ge k_0 + 1} A_{d, E, k}$ and start by bounding the $\Var[A_{d, E, k_0}]$. 
    To that end, we note that proceeding as in Lemma \ref{lem:Meckeapplied}, we obtain the decomposition
    $$ \Var[A_{d, E, k_0}] = \f1{((k_0 + 1)!)^2} \E[A^{(1)}_{d, E, k_0}] + {((k_0+1)!)^2} w_{E, k_0, k_0}^{(1)},$$
    where $A^{(1)}_{d, E, k_0}$ is defined as $A_{d, E, k_0}$ except for replacing $\aaa$ by $\aaa^2$ and where
    $$w^{(1)}_{E, k_0, k_0} := \la_d^{d (2k_0 + 2)}g_{2, E}^{(1)}( k_0, k_0) - \E[A_{d, E, k_0}]^2.$$
    Here, setting $h(\xx,t,t') := (\aaa(\RR( \xx;t')) - \aaa(\RR( \xx;t))) \one\{\RR(\xx;t') \text{ is connected}\}$  gives that 
    \begin{align*}
        g_{2, E}^{(1)}(k_0,k_0)
	    &:=\int_{\overline W^{k_0 + 1}_{d, 1}}\int_{\overline W^{k_0 + 1}_{d, 1}} e^{-\la_d^d |B_{t'}((\xx, \xx'))|} \one\{ \dist(\xx, \xx')^d > t' \}
        \, h(\xx,t,t') \, h(\xx,t,t') 
        \d\xx'\d \xx.
    \end{align*}
    Now, note that $B_{t'}((\xx, \xx')) = B_{t'}(\xx) \cup B_{t'}(\xx')$ if $\dist(\xx, \xx') > 2$ and that $|B_{t'}(\xx)| \le (k_0 + 1)2^d$. In particular, an application of Fubini's theorem gives that
    $w_{E, k_0, k_0}^{(1)} \le \la_d^{d (2k_0 + 2)}|W_d|(k_0 + 1)2^d|E|_{(k_0)} s_{k_0}^2$.
    Therefore, recalling that $s_{k_0 } \in O(\vm^d)$ shows that $ w_{E, k,_0 k_0}^{(1)} \in o(|E|\r_d)$. 
    Hence, invoking the bounds derived when computing $\E[A_{d, E, k_0}]$ shows that $\Var[A_{d, E, k_0}] \in O(\r_d |E|)$.
    
    Second, we prove that $\Var[\sum_{k \ge k_0 + 1}A_{d, E, k}] \in O(\r_d |E|)$.  Moreover, we write
    $$ \Var[\sum_{k \ge k_0 + 1}A_{d, E, k}^{}] = \sum_{k \ge k_0 + 1}\f1{((k + 1)!)^2} \E[A^{(1)}_{d, E, k}] + \sum_{k,k'\ge k_0 + 1} \frac1{(k+1)! \, (k'+1)!} w_{E, k, k'}^{(2)},$$
    where 
    $$w^{(2)}_{E, k,k'} := \la_d^{d (k + k' + 2)}g_{2, E}^{(2)}( k, k') - \E[A^{(2)}_{d, E, k}]\E[A^{(2)}_{d, E, k'}].$$
    Here,
    \begin{align*}
        g_{2, E}^{(2)}(k, k')
	    &:=\int_{\overline W^{k + 1}_{d, 1}}\int_{\overline W^{k' + 1}_{d, 1}}e^{-\la_d^d |B_{t'}((\xx, \xx'))|} \one\{ \dist(\xx , \xx')^d > t'\} h(\xx,t,t') h(\xx',t,t') \d\xx' \d \xx.
    \end{align*}
    We note that if $\dist(\xx,\xx') > 2$, then  $B_{t'}((\xx, \xx')) = B_{t'}(\xx) \cup B_{t'}(\xx')$. In particular, an application of Fubini's theorem gives that $w_{E, k, k'}^{(2)}$ is at most
    $$\la_d^{d(k + k' + 2)} |W_d|2^d\Big(\int_{\overline W^{k }_{d, 1}}h((o, \xx),t,t') \d\xx\Big) \Big(\int_{\overline W^{k' }_{d, 1}}h((o, \xx'),t,t') \d\xx' \Big) \le \la_d^{d(k + k' + 2)} |W_d|2^ds_{k }s_{k' }.$$
    Again, recalling that $s_{k } \le (k + 1)! \, (cM)^{d k}$ shows that
    $$ \sum_{k,k'\ge k_0 + 1} \frac1{(k+1)! \, (k'+1)!} w_{E, k, k'}^{(2)}	\le 2^{2d}|W_d||E|\sum_{k \ge k_0 + 1}\la_d^{d (k + 1)}(cM)^{d k}\sum_{k' \ge k_0 + 1}\la_d^{d (k' + 1)}(cM)^{d k'}.$$
    Therefore, $\Var[A_{d, E}^{(2)}] \in O(|E|\r_d)$, as asserted.
\enp

%
%MULT CLT
%
\subsection{Multivariate CLT}
\label{mult_sec}

To prove the multivariate CLT, we proceed in two steps. First, we show that it suffices to consider the functional restricted to components in $\AA_{k_0}$.
Then, the key step in the proof of part 2 of Theorem \ref{fclt} is to establish the following CLT for the restricted functional.
We define $	r(t) := \f{t^{k_0}}{((k_0 + 1)!)^2}\sum_{G\in \AA^{\ms m}_{k_0}} \aaa(G)^2$.

%
%REST CLT
%
\bel[Multivariate CLT for the restricted functional]
    \label{clt_lem}
    Then, as $d \tff$,
    $$\r_d^{-1/2}({A_{d, t, k_0} - \E\bigl[A_{d,t, k_0}\bigr]}) \Rightarrow B_{r(t)},$$
    in the sense of convergence of finite-dimensional marginals. Note that the scaling above implies the scaling appearing in Theorem \ref{fclt}.
\enl

%
%MULT CLT PRF
%
\bep[Proof of Theorem \ref{fclt}, multivariate CLT]
    The covariance asymptotics in \eqref{var_low_eq} imply that
    $$\Var\Bigl[\sum_{k \ge k_0 + 1}A_{d, t, k}\Bigr] \in o(\r_d).$$
    Hence, combining Lemma \ref{clt_lem}, and Chebyshev's inequality concludes the proof.
\enp

Thus, it remains to establish the restricted multivariate CLT in Lemma \ref{clt_lem}. The key idea is to proceed as in \cite{thomas} and rely on Stein's method in the form of \cite[Theorem 2.4]{penrose}.

%
%\CCCLT PRF
%
\bep[Proof of Lemma \ref{clt_lem}]
    In this proof, we will use the following notation,
    $$ A'_{d, t} := \r_d^{-1/2}\bigl({A_{d,t, k_0} - \E\bigl(A_{d,t, k_0}\bigr)}\bigr),\quad
    Y_t := B_{r(t)}, $$
    as well as 
    \begin{align*}
        A_{d, \bs t, k_0} &:= (A_{d, t_1, k_0},\dots,A_{d, t_m, k_0}),\quad &A'_{d,\bs t} &:= (A'_{d,t_1},\dots,A'_{d,t_m}),\quad &Y_{\bs t} &:= (Y_{t_1},\dots,Y_{t_m}),\\
        \cb A_{d, \bs t, k_0} &:= \sum_{i\le m} c_i A_{d, t_i, k_0},\quad &\cb A'_{d,\bs t} &:= \sum_{i \le m} c_i A'_{d,t_i},\quad &\cb Y_{\bs t} &:= \sum_{i\le m} c_i Y_{t_i},
    \end{align*}
    for any $m \ge 1$, $t,t_1,\dots,t_m \le 1$, and $c_1,\dots,c_m \in \R$.
    
    Let $\bs t = (t_1, \dots, t_m)$ with $0 < t_1 < \cdots < t_m \le 1$.
    We have to show that, $ A'_{d, \bs t} \Rightarrow Y_{\bs t} $, as $d\tff$.
    By the Cramér-Wold theorem, it is enough to show that, $\cb A'_{d, \bs t} \Rightarrow \cb Y_{\bs t}$, for any $ c_1,\dots,c_m \in \R$.
    Observe that for any $0< t \le t'\le 1$, $\E[A'_{d, t}] = 0=\E[Y_t]$ and, by the moment asymptotics of Theorem~\ref{fclt},
    \begin{align} \label{eq:covlimit}
        \Cov[A'_{d, t},A'_{d, t'}]
        = \rho_d^{-1} \Cov[ A_{d,t, k_0}, A_{d,t', k_0} ]
        \to \frac{t^{k_0}}{((k_0+1)!)^2} \sum_{G\in \AA^{\ms m}_{k_0}} \aaa(G)^2
        = \Cov[Y_t,Y_{t'}].
    \end{align}
    Note that, by bilinearity of the covariance, Equation \eqref{eq:covlimit} implies that
    \begin{align*}
        \Var (\cb A'_{d, \bs t})
        &= \sum_{i,j} c_i c_j \Cov[ A'_{d, t_i} , A'_{d, t_j} ]
        \to \sum_{i,j} c_i c_j \Cov[ Y_{t_i} , Y_{t_j} ]
        = \Var[\cb Y_{\bs t}].
    \end{align*}
    In particular, if $\Var[\cb Y_{\bs t}] =0$, then $\Var[\cb A'_{d, \bs t}]\to 0$ and $\cb A'_{d, \bs t} \Rightarrow \E \cb A'_{d, \bs t} = 0 = \cb Y_{\bs t}$.
    Therefore, we can assume without loss of generality, that $\Var[\cb A'_{d, \bs t}] \to \Var[\cb Y_{\bs t}] \neq 0$.
    It remains to show that $\cb A'_{d, \bs t} \Rightarrow \cb Y_{\bs t}$, which is equivalent to showing that $(\cb A_{d, \bs t, k_0} - \E \cb A_{d, \bs t, k_0}) /\sqrt{\Var[\cb A_{d, \bs t, k_0}]}$ converges in distribution to a standard normal random variable.
    
    For this we will write $\cb A_{d, \bs t, k_0}$ as a sum of local contributions and apply Stein's method as presented in \cite[Theorem 2.4]{penrose}.
    First, by additivity, 
    $$ \cb A_{d, \bs t, k_0}
    = \sum_{i \le m} c_i A_{d,t_i, k_0}
    = \sum_{\substack{i \le m \\G \in \Comp_d( t_i) \\ |G| = k_0+1} } c_i \aaa\bigl( G\bigr). $$
    For a graph a component $G$ consisting of $k_0 + 1$ vertices, we let $z(G)$ be the lexicographic minimum. Note that this is well-defined since any point set of diameter at most $k_0$ can be considered as a set in the Euclidean space that does not wrap around the torus boundary.
    For $d$ large enough, we can partition the cubical sampling window $W_d$ into subcubes of a side length $\ell = \ell_d$ satisfying $2k_0 \le \ell \le 2k_0 + 1$. Then, we define the set $V_d := \ell \Z^d \cap W_d$, and for $v \in V_d$, we let
    \[\xi_{v, d}
    := \sum_{\substack{i \le m \\ G \in \Comp_d(t_i) \\ |G| = k_0+1} } c_i\one\{z(G) \in(v + [0, \ell]^d) \}\aaa\bigl( G\bigr) \]
    denote the contribution to $\cb A_{d, \bs t, k_0}$ coming from components centered in $v + [0, \ell]^d $.
    Note that if $G$ is as in the last sum (i.e. a connected component with center in $v+[0,\ell]^d$ and cardinality $k_0+1$), then its diameter is at most $k_0 s_d(t_m)$ and thus is contained in the cube $v+[-k_0,\ell+k_0]^d$.
    In particular, it is unaffected by modifications of the point process outside $\cup_{\mathbf{\varepsilon} \in \{-\ell,0,\ell\}^d} v + \varepsilon + [0,\ell]^d $.
    Therefore,
    $ \xi_{v,d} \text{ and } \{ \xi_{w,d} : w-v \not\in \{-\ell,0,\ell\}^d \}$ are independent, for any $v\in V_d$.
    
    We write $\overline\xi_{v, d} := \xi_{v, d} - \E[\xi_{v, d}]$ for the recentered expression and
    $\xi_{v, d}' := {\overline\xi_{v, d}}/{\sqrt{\Var\bigl[ \cb A_{d, \bs t, k_0}\bigr]}}$ for the normalized quantity. Note that we may invoke the variance asymptotics from part 1 of Theorem \ref{fclt} on the functional $\aaa_{k_0}(\cdot)$ that coincides with the original functional $\aaa(\cdot)$ on components of size $k_0 + 1$ and is set to 0 for other components. Therefore, the variance of $ \cb A_{d, \bs t, k_0}$ is of order $\r_d$.
    We are now in the setting to apply Stein's method as presented in \cite[Theorem 2.4]{penrose},
    \begin{align*}
        \Bigl| \P \Bigl( \frac{ \cb A_{d, \bs t, k_0} - \E\bigl[ \cb A_{d, \bs t, k_0}\bigr]}{ \sqrt{\Var \bigl( \cb A_{d, \bs t, k_0}\bigr) }} \le x\Bigr) - \P(Z \le x)\Bigr| &= \Bigl|\P\Bigl( \sum_{v} \xi_{v, d}'\le x\Bigr) - \P(Z \le x)\Bigr|\\
        &\le 2 (2\pi)^{-1/4} \sqrt{D^2 \sum_{v\in V_d} \E[|\xi_{v, d}'|^3] } + 6 \sqrt{D^3 \sum_{v\in V_d} \E[|\xi_{v, d}'|^4]} ,
    \end{align*}
    where $Z$ is a standard normal random variable and $D = |\{-\ell,0,\ell\}^d| = 3^d$.
    Note that $(\xi_{v,d}')_{v\in V_d}$ are identically distributed, and recall that $V_d$ consists of at most $|W_d|$ elements. Thus, there exists a large enough constant $c>0$, such that
    \begin{align*}
        \Bigl|\P\Bigl( \frac{ \cb A_{d, \bs t, k_0} - \E\bigl[ \cb A_{d, \bs t, k_0}\bigr]\bigr)}{ \sqrt{\Var \bigl(A_{d, \bs t, k_0}\bigr) }} \le x\Bigr) - \P(Z \le x)\Bigr|
        &\le \sqrt {\b_{3, d}} + \sqrt{\b_{4, d}},
        & \b_{j, d}
        &:= c^d|W_d| \, \E[|\xi_{o, d}'|^j] .
    \end{align*}
    Now, by definition of $\xi_{o, d}'$, we have that $\E[|\xi_{o, d}'|^j] = \E[|\overline\xi_{o, d}|^j] / \Var\bigl[ \cb A_{d, \bs t, k_0}\bigr]^{j/2} $, and that $\Var[ \cb A_{d, \bs t, k_0}] = \rho_d^{d} \Var[ \cb A'_{d, \bs t, k_0} ] \sim \r_d \Var[ \cb Y_{\bs t, k_0} ] \asymp \r_d$.
    Thus,
    $ \b_{j, d}
    \le \r_d^{-j/2}{|W_d|}c^d \E[|\overline\xi_{o, d}|^j] $, 
    up to a change of the value~$c$.
    To bound $|\overline\xi_{o, d}|$, we let $P:= \PP_d ([-\ell,2\ell]^d)$ be a Poisson random variable with parameter $(3 \ell \la_d)^d$ and note that 
    \begin{align*}
        |\overline\xi_{o, d}|
        &\le \sum_{i \le m} |c_i| \times \big|\{ G \in \Comp( \PP_d;s_d(t_i)) : z(G) \in [0, \ell]^d \,,\, |G| = k_0+1 \}\big| \times \max_{G\in \AA_{k_0}} \aaa(G)
        \\&\le m \max_{ i \le m} |c_i| \times \one\{ \PP_d ([-\ell,2\ell]^d) \ge k_0+1 \} \times \PP_d([-0,\ell]^d) \times \max_{G\in \AA_{k_0}} \aaa(G)
        \\&\le c \, P\, \one\{ P\ge k_0+1 \} ,
    \end{align*}
    where $c$ is some large enough constant.
    Therefore,
    \begin{align*}
        \E[|\overline\xi_{o, d}|^j]
        &\le c \, \E [P^j\one\{ P\ge k_0+1 \} ]
        = c \sum_{k\ge k_0+1} e^{-(3\ell\la_d)^d} \frac{(3 \ell \la_d)^{ dk}}{k!} k^j
        \le (c \la_d)^{d (k_0+1)} ,
    \end{align*}
    where the last step holds after increasing the value of $c$, and is justified since the sum is dominated by its first term. Therefore,
    $ \b_{j, d}
    \le |W_d| \, \r_d^{-j/2} (c \la_d^{k_0+1} )^{{d}}
    = [\rho_d^{(1-j/2)/d}c \,\vm^{-1}]^d
    \to 0 $,
    and hence,
    \begin{align*}
        \Bigl|\P\Bigl( \frac{\cb A_{d, \bs t, k_0} - \E\bigl[\cb A_{d, \bs t, k_0}\bigr]}{ \sqrt{\Var \bigl(\cb A_{d, \bs t, k_0}\bigr) }} \le x\Bigr) - \P(Z \le x)\Bigr| \to 0.
    \end{align*}
\enp

\section{Proof of Theorem \ref{pois_thm}}
\label{pat_sec}
Henceforth, we always assume that $\aaa$ is an additive nonnegative functional and that $\r_d \to K>0$.

This section is organized as follows.
First, we present three auxiliary results, Lemmas \ref{lem:reduc0}--\ref{lem:iidPoisson}, and elucidate how they enter the proof of Theorem \ref{pois_thm}.
Second, based on these auxiliary results, we present a brief proof of Theorem \ref{pois_thm}. Finally, we prove the Lemmas \ref{lem:reduc0}--\ref{lem:iidPoisson}.

As a first step we will see that with high probability, all components contributing to $A_{d,t}$ have $k_0+1$ vertices and satisfy additional properties. To make this precise, we consider the approximate process
$$ A_{d, t, k_0}' 
\coloneqq \sum_{G\in\AA^{\ms m}_{k_0}} \aaa(G) \frac{1}{(k_0+1)!} \sum_{\xx \in \PP_{\ne}^{k_0 + 1} \cap \mathbf{C}'_{d,k_0}} \one\{ \RR(\xx; t) = \RR(\xx; 1) = G \} ,$$
where
$$ \mathbf{C}'_{d,k_0}
\coloneqq \{ \xx \in W_d^{k_0+1} :
\diam(\xx) \le k_0 \},$$
with $\diam(\xx) = \max_{0\leq i < j \leq k} |x_j-x_i|$ for any $\xx = (x_0,\ldots,x_k)\in \R^{d(k+1)}$ and $k\in \N_0$.
\begin{lemma}[Reduction 1: special configurations]
    \label{lem:reduc0}
    With high probability, the processes $(A_{d, t})_{t\le 1}$ and \allowbreak $(A_{d, t, k_0}')_{t\le 1}$ are identical.
\end{lemma}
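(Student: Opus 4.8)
The plan is to show that with high probability the two processes coincide, by exhibiting a single "bad event'' $\mathcal{B}_d$ of vanishing probability outside of which $(A_{d,t})_{t\le1}$ and $(A_{d,t,k_0}')_{t\le1}$ agree identically as functions of $t$. The decomposition $A_{d,t}=\sum_{k\ge0}A_{d,t,k}$ from Lemma \ref{lem:Meckeapplied} shows that the discrepancy between the two processes comes from exactly two sources: (i) connected components of size $k+1$ with $k\neq k_0$ that contribute to some $A_{d,t}$, i.e.\ components $G\in\AA_k$, and (ii) components of size $k_0+1$ that either fail to lie in $\mathbf{C}'_{d,k_0}$ (i.e.\ wrap around the torus or have diameter exceeding $k_0$, which is impossible once they are genuine components with connectivity radius $s_d(t)\le1$ — so this is really about the parametrization, see below), or whose induced graph $\RR(\xx;1)$ is not in $\AA^{\ms m}_{k_0}$ (i.e.\ has $v(G)<v_{\ms{max}}$, equivalently carries a superfluous edge), or for which $\RR(\xx;t)\neq\RR(\xx;1)$ for some $t\le1$ while still contributing (this last one cannot happen for a component of minimal size, since a minimal-size component is a tree-like configuration realized as soon as its spanning tree appears — but one must check this against the definition of $\AA_{k_0}$). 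So I would define
\[
\mathcal{B}_d := \Big\{ \exists\, k\neq k_0,\ \exists\, G\in\Comp_d(1) \text{ with } |G|=k+1 \text{ and } \aaa(G)\neq0 \Big\} \cup \Big\{ \exists\, G\in\Comp_d(1) \text{ with } |G|=k_0+1,\ v(\RR(G;1))<v_{\ms{max}} \Big\},
\]
and argue $\P(\mathcal{B}_d)\to0$.

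First I would bound the probability of the first event. By the Mecke formula, for each fixed $k>k_0$ the expected number of components $\xx$ of size $k+1$ with $\RR(\xx;1)\in\AA_k$ is at most $\la_d^{d(k+1)}/(k+1)!$ times the integral counting configurations containing a spanning tree, which by the Cayley-formula/Stirling computation already carried out in the proof of part 1 of Theorem \ref{fclt} (display \eqref{eq:stirling}) is bounded by $M^{dk}\la_d^{d(k+1)}|W_d|$ for a constant $M$ independent of $d$. Summing the geometric series in $k\ge k_0+1$ exactly as in \eqref{eq:S_k_rest} gives a bound of order $|W_d|\la_d^{d(k_0+2)}M^{d(k_0+1)}$. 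Since $\r_d=|W_d|\la_d^{d(k_0+1)}v_{\ms{max}}^d\to K$, this is $O(\la_d^d (M/v_{\ms{max}})^d \cdot \text{const})\to0$ using $\la_d\to0$ — provided one absorbs the constant $M$; if $M>v_{\ms{max}}$ one instead writes the bound as $\r_d\cdot(\la_d M^{k_0+1}/v_{\ms{max}})^{d}$ and notes $\la_d\to0$ kills it. Components with $k<k_0$ contribute nothing since $\AA_k=\es$ for $k<k_0$ by definition of $k_0$. By Markov's inequality the first event has vanishing probability.

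Next I would handle the second event: components of the minimal size $k_0+1$ whose graph at radius $1$ carries a superfluous edge, i.e.\ $v(\RR(G;1))<v_{\ms{max}}$. Applying Mecke again, the expected number of such components is bounded using the inclusion-exclusion estimate \eqref{subsetneq}–\eqref{eq:s0c} already in the text: the relevant integral is $\sum_{e\notin E(G)} v(G\cup e)^d \le k_0^2 \a^d v_{\ms{max}}^d$ for some $\a<1$ (maximized over $G\in\AA_{k_0}$), so the expected count is $O(|W_d|\la_d^{d(k_0+1)}\a^d v_{\ms{max}}^d)=O(\r_d\a^d)=O(\a^d)\to0$. Markov again. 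Finally, once $\mathcal{B}_d$ fails, every contributing component has exactly $k_0+1$ vertices, has diameter $\le k_0 s_d(1)\le k_0$ so lies in $\mathbf{C}'_{d,k_0}$ (and does not wrap the torus), and satisfies $\RR(\xx;1)\in\AA^{\ms m}_{k_0}$; moreover for such a minimal configuration the indicator $\one\{\RR(\xx;t)=\RR(\xx;1)=G\}$ picks up exactly the $t$ at which the component is present with $\aaa\neq0$, matching the summand of $A_{d,t,k_0}'$. The one point demanding care is this last matching — checking that on $\mathcal{B}_d^c$ we genuinely have $\aaa(\RR(\xx;t))=\aaa(G)\one\{\RR(\xx;t)=G\}$ for all $t\le1$ simultaneously, i.e.\ that a minimal-size contributing component does not pass through an intermediate graph in $\AA_{k_0}\setminus\AA_{k_0}^{\ms m}$ before reaching $G$; I expect this to follow from the observation that a component realizing $G\in\AA_{k_0}^{\ms m}$ at radius $s_d(t)$ already realizes $G$ for all larger radii up to $1$ because adding the superfluous edges would force a strictly larger $\RR$, contradicting the $\AA^{\ms m}$ membership — but pinning down exactly which graphs occur as $\RR(\xx;t)$ for $t$ ranging over $[0,1]$ is the main obstacle and the place where the $l_\infty$-geometry and the definition of $k_0$ must be used carefully.
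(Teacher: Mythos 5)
Your general strategy — exhibit a bad event of vanishing probability outside of which the two processes coincide, then bound its probability via the Mecke formula — is the same as the paper's, and your probability estimates (Cayley/Stirling for wrong-size components, the superfluous-edge estimate via $\sum_e v(G\cup e)^d$) are correct and mirror the paper's $F_{d,1}$ and $F'_{d,2}$ bounds. But the bad event $\mathcal{B}_d$ you write down is too weak to make the conclusion go through.

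The concrete gap is that $\mathcal{B}_d$ is phrased purely at $t=1$ and restricted to components $G$ of $\Comp_d(1)$ with $\aaa(G)\neq0$, while the discrepancy between $A_{d,t}$ and $A'_{d,t,k_0}$ must be controlled for every $t\in[0,1]$ simultaneously. Consider a $(k_0+1)$-tuple $\xx$ forming a minimal component $G\in\AA^{\ms m}_{k_0}$ of $\Rips_d(t)$ for some $t<1$, which by time $1$ is joined by an extra Poisson point $y$ so that $\xx\cup\{y\}$ is a component of $\Rips_d(1)$ with $\aaa(\cdot)=0$. Neither part of $\mathcal{B}_d$ fires: the $\Comp_d(1)$-component has the wrong size but vanishing $\aaa$, and $\xx$ itself is not in $\Comp_d(1)$ at all. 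Yet $A'_{d,1,k_0}$ still counts $\xx$ (the indicator $\one\{\RR(\xx;1)=\RR(\xx;1)=G\}$ is oblivious to $y$), whereas $A_{d,1}$ does not, so the processes disagree on $\mathcal{B}_d^c$. The same kind of absorption scenario also defeats your reduction for large-$k$ components at times $t<1$. The paper avoids this trap by using the purely geometric event $F_{d,1}$ (existence of \emph{any} $(k_0+2)$-tuple in $\PP_{\neq}^{k_0+2}$ of diameter at most $k_0+1$, with no reference to $\aaa$ or to component structure at $t=1$), which dominates all absorption scenarios at once; you should enlarge $\mathcal{B}_d$ to include such an event.

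Separately, you flag but do not close the issue of whether $\RR(\xx;t)=\RR(\xx;1)$ for all $t$ once $\RR(\xx;t)\in\AA^{\ms m}_{k_0}$. Your heuristic about tree-like configurations is not what is actually used. The paper's resolution (its event $F_{d,2}^{(2)}$) is short: if $\RR(\xx;t)\in\AA^{\ms m}_{k_0}$ and $\RR(\xx;t)\subsetneq\RR(\xx;1)$, then $\RR(\xx;1)$ has a strictly smaller $v$-value than $v_{\ms{max}}=v(\RR(\xx;t))$, so one lands back in the superfluous-edge event you already bounded. Once both gaps are filled your bad event becomes a union of three events matching the paper's $F_{d,1}$, $F'_{d,2}$ and $F_{d,2}^{(3)}\subseteq F_{d,1}$, and the proof goes through.
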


The index set $\PP_{\ne}^{k_0 + 1} \cap \mathbf{C}'_{d,k_0}$ in the definition of $A_{d, t, k_0}'$ consists of $(k_0+1)$-tuples of distinct Poisson points satisfying some extra condition. 
The next reduction of the problem is an approximation of this sum by a sum over a Poisson process (of $(k_0+1)$-tuples of points). That is, we approximate $A_{d, t, k_0}'$ by a Poisson functional. We will do that by making use of the machinery in \cite{schultePoissonApprox}.
We set
$$ \xi_d \coloneqq \frac{1}{(k_0+1)!} \sum_{\xx \in \PP_{\ne}^{k_0 + 1} \cap \mathbf{C}'_{d,k_0}} \delta_{f(\xx)},$$
where $f\colon W_d^{k_0+1} \to W_d^{k_0+1}$ is an arbitrary measurable map such that the following two properties hold for any $\xx\in W_d^{k_0+1}$ and any permutation $\sigma$ of $\{0,\ldots,k_0\}$: 
1) $\{x_0,\ldots,x_d\} = \{ f_0(\xx) , \ldots , f_d(\xx) \}$, and 
2) $f(\xx) = f(x_{\sigma(0)},\ldots, x_{\sigma(k_0)} )$.
In simple terms, the map $f$ selects for any set of points $\{x_0,\ldots,x_{k_0}\}$ a unique ordering $(x_0,\ldots,x_{k_0})$.
Now, we can represent $A'_{d,t,k_0}$ as follows
$$ A'_{d,t,k_0}
= \sum_{G\in\AA^{\ms m}_{k_0}} \aaa(G) \sum_{\xx \in \xi_d} \one\{ \RR(\xx; t) = \RR(\xx; 1) = G \} .$$
%%%
With \cite[Theorem 3.1]{schultePoissonApprox} we will show that $\xi_d$ can be approximated by a Poisson point process in the space
$$ \widehat{\mathbf{C}}_d 
\coloneqq f(\mathbf{C}_d ) 
= \{ \xx \in W_d^{k_0+1} :  \, f(\xx)=\xx , \, \diam(\xx) \le k_0 \} $$
with intensity measure given by the restriction to $\widehat{\mathbf{C}}_d \subseteq  W_d^{k_0+1} $ of $\lambda_d^{d(k_0+1)}$ times the Lebesgue measure.
%%%
\begin{lemma}[Reduction 2: Poisson process approximation] 
    \label{lem:reduc2}
    There exists a coupling of $\xi_d$  and a homogeneous Poisson point processes $\zeta_{d}$ on $\widehat{\mathbf{C}}_d$ of intensity $\lambda_d^{d(k_0+1)}$ such that $\lim_{d\to\infty} \P( \xi_d = \zeta_d ) =1 $.
\end{lemma}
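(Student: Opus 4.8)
The plan is to apply the Poisson approximation bound from \cite[Theorem 3.1]{schultePoissonApprox} to the point process $\xi_d$ of ordered $(k_0+1)$-tuples, and to show that the resulting total-variation-type distance to the homogeneous Poisson process $\zeta_d$ of intensity $\lambda_d^{d(k_0+1)}$ on $\widehat{\mathbf C}_d$ tends to $0$. Concretely, $\xi_d = \tfrac1{(k_0+1)!}\sum_{\xx\in\PP_\ne^{k_0+1}\cap\mathbf C'_{d,k_0}}\delta_{f(\xx)}$ is a $U$-statistic-type point process driven by the Poisson process $\PP_d$ with intensity $\lambda_d^d$ on $W_d$; its intensity measure is exactly $\lambda_d^{d(k_0+1)}$ times Lebesgue restricted to $\widehat{\mathbf C}_d$, by the Mecke formula together with the normalisation by $(k_0+1)!$ and the symmetry properties of $f$. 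The bound in \cite{schultePoissonApprox} controls the discrepancy in terms of (i) a ``variance-type'' term measuring how far $\xi_d$ is from having a Poisson number of points in local regions, and (ii) an ``interaction'' term coming from pairs of tuples that share at least one underlying Poisson point. Since $\xi_d$ has at most one atom per unordered tuple and the tuples are built from a genuine Poisson process, term (i) will essentially vanish, and the whole estimate reduces to controlling overlap contributions.

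First I would record the intensity computation: by Mecke, $\E[\xi_d(B)] = \tfrac1{(k_0+1)!}\lambda_d^{d(k_0+1)}\int_{W_d^{k_0+1}}\one\{f(\xx)\in B\}\,\d\xx = \lambda_d^{d(k_0+1)}\,\mathrm{Leb}(B\cap\widehat{\mathbf C}_d)$ for $B\subseteq\widehat{\mathbf C}_d$, using that $f$ is $(k_0+1)!$-to-one onto $\widehat{\mathbf C}_d$ off a null set. Next I would identify the ``bad event'' preventing $\xi_d$ from being exactly Poisson, namely the existence of two distinct unordered $(k_0+1)$-subsets of $\PP_d$, each of diameter $\le k_0$, that intersect in $1\le r\le k_0$ common points. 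Applying the multivariate Mecke formula, the expected number of such overlapping pairs with exactly $r$ shared points is of order $|W_d|\,\lambda_d^{d(2(k_0+1)-r)}\,C^d$ for a dimension-independent constant $C$ (one free point to place in $W_d$, the remaining $2(k_0+1)-r-1$ points within bounded $l_\infty$-distance, each contributing a volume factor $\le C_0^d$). Since $\rho_d = |W_d|\lambda_d^{d(k_0+1)}v_{\ms{max}}^d \to K$, we have $|W_d|\lambda_d^{d(k_0+1)}\asymp 1$, so this expectation is $\asymp \lambda_d^{d(k_0+1-r)}C^d \le (C\lambda_d)^d \to 0$ because $r\le k_0$ forces the exponent on $\lambda_d$ to be at least $d$ and $\lambda_d\to0$. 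Hence with high probability no two contributing tuples overlap at all.

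Then I would feed these estimates into \cite[Theorem 3.1]{schultePoissonApprox}: the relevant error terms there are sums over $r\ge 1$ of exactly the overlap expectations just bounded (plus a term that is identically zero here because each atom of $\xi_d$ appears with multiplicity one and the local counts are genuinely Poisson-many up to these overlaps), and we have shown each summand is $o(1)$ with a geometric-in-$r$ decay, so their sum is $o(1)$. This yields a coupling of $\xi_d$ and the homogeneous Poisson process $\zeta_d$ on $\widehat{\mathbf C}_d$ of intensity $\lambda_d^{d(k_0+1)}$ with $\P(\xi_d=\zeta_d)\to1$, as claimed. I expect the main obstacle to be purely bookkeeping: correctly matching the abstract hypotheses and error functionals of \cite[Theorem 3.1]{schultePoissonApprox} to the present $U$-statistic point process (in particular verifying that the ``self-intersection'' term vanishes because $f$ picks one representative per orbit, and getting the combinatorial factors from the $(k_0+1)!$ normalisation right), together with the uniform-in-$r$ volume bound $\le C_0^d$ for configurations of bounded diameter, which is where the $l_\infty$ structure and the sparsity $\lambda_d\to0$ do the real work. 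Everything else is a direct consequence of the Mecke formula and the assumption $\rho_d\to K$.
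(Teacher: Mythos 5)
Your approach is essentially the same as the paper's: both invoke \cite[Theorem 3.1]{schultePoissonApprox} for the Kantorovich--Rubinstein distance, verify by Mecke that $\xi_d$ has intensity measure $\lambda_d^{d(k_0+1)}$ times Lebesgue on $\widehat{\mathbf{C}}_d$, and bound the resulting error functional $r(\widehat{\mathbf{C}}_d)$ --- which is exactly the overlapping-tuple quantity you describe --- by using the volume bound for configurations of $l_\infty$-diameter at most $k_0$ together with $\rho_d\to K$ and $\lambda_d\to 0$. One small imprecision: $|W_d|\lambda_d^{d(k_0+1)} = \rho_d/v_{\ms{max}}^d$ is not $\asymp 1$ when $v_{\ms{max}}>1$ (which holds for $k_0\ge 1$, where $v_{\ms{max}}\ge 2$); it in fact tends to $0$ geometrically, but since you only need an upper bound this only helps, and the rest of your estimate goes through unchanged.
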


%%%
Thus, $A'_{d,t,k_0}$ is approximated by the following Poisson functional: 
$$A''_{d,t,k_0}
\coloneqq \sum_{G\in\AA^{\ms m}_{k_0}} \aaa(G) \sum_{\xx \in \zeta_{d}} \one\{ \RR(\xx; t) = \RR(\xx; 1) = G \} ,$$
where $\zeta_d$ is a homogeneous Poisson point process on $\widehat{\mathbf{C}}_d$ of intensity $\lambda_d^{d(k_0+1)}$ such that $\lim_{d\to\infty} \P( \xi_d = \zeta_d ) =1 $.
Finally, we set, for $G\in\AA^{\ms m}_{k_0}$ and $t\in[0,1]$,
$$ N_{d,t}^{(G)} 
\coloneqq \sum_{\xx\in  \zeta_d} \one\{\RR(\xx; t) = \RR(\xx; 1) = G\} . $$
%%%
\begin{lemma}[Independent Poisson processes]
    \label{lem:iidPoisson}
    For any $d\in\N$, the processes $(N^{(G)}_{d,t})_{t \le 1}$, $G \in \AA^{\ms m}_{k_0}$, are independent Poisson processes.
    Moreover, as $d\to \infty$, with respect to the Skorokhod topology,
    $$\bigl(N^{(G)}_{d,t}\bigr)_{t \le 1}\Rightarrow \bigl(N^{(G)}_{t}\bigr)_{t\le 1},$$ 
    where $(N_t^{(G)})_{t\le 1}$ are Poisson processes with expected value $ K t^{k_0} / (k_0 + 1)!$ at time $t$. 
\end{lemma}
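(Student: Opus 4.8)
The plan is to prove first the exact distributional statement for fixed $d$, and then the limiting statement, which is a standard weak-convergence argument for counting processes. For the exact statement, observe that each $N^{(G)}_{d,t} = \sum_{\xx\in\zeta_d}\one\{\RR(\xx;t)=\RR(\xx;1)=G\}$ is obtained from the Poisson process $\zeta_d$ on $\widehat{\mathbf C}_d$ by keeping only the atoms $\xx$ for which $\RR(\xx;1)=G$ (a condition on $\xx$ alone) and then recording the \emph{birth time} $\t(\xx):=\diam(\xx)^d = \min\{t\le 1\co \RR(\xx;t)=G\}$ of that configuration (for $\xx$ with $\RR(\xx;1)=G$ one has $\RR(\xx;t)=G$ exactly when $t\ge\diam(\xx)^d$, using that $G\in\AA^{\ms m}_{k_0}$ is already the complete subgraph on its realized edges... more precisely $\RR(\xx;t)$ is nondecreasing in $t$ and equals $\RR(\xx;1)$ once $t$ exceeds the largest pairwise distance raised to the power $d$). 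Hence $N^{(G)}_{d,\cdot}$ is the point process on $[0,1]$ given by the images under $\xx\mapsto\t(\xx)$ of the atoms of $\zeta_d$ restricted to the set $\{\xx\co\RR(\xx;1)=G\}$. Since distinct $G,G'\in\AA^{\ms m}_{k_0}$ correspond to disjoint subsets of $\widehat{\mathbf C}_d$ (an $\xx$ cannot have $\RR(\xx;1)$ equal to two different graphs), these are restrictions of the Poisson process $\zeta_d$ to disjoint measurable sets, hence independent by the restriction/independence property of Poisson processes \cite[Theorem 5.2]{poisBook}; and each, being a measurable image of a Poisson process, is again a Poisson process on $[0,1]$ by the mapping theorem \cite[Theorem 5.1]{poisBook}. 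Its intensity measure assigns to $[0,t]$ the value $\lambda_d^{d(k_0+1)}\cdot\big|\{\xx\in\widehat{\mathbf C}_d\co\RR(\xx;1)=G,\ \diam(\xx)^d\le t\}\big| = \lambda_d^{d(k_0+1)}\cdot\big|\{\xx\in\widehat{\mathbf C}_d\co\RR(\xx;t^{1/d})=\RR(\xx;1)=G\}\big|$, which is exactly $\lambda_d^{d(k_0+1)}((k_0+1)!)^{-1}g_{1,t,t}(G,G)$ up to the normalisation of the ordering map $f$; this is what identifies $(N^{(G)}_{d,t})$ as a Poisson process.

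For the convergence, I would first compute the limit of $\E[N^{(G)}_{d,t}]$. By the same Mecke-type computation as in Lemma \ref{lem:Meckeapplied} and the asymptotic analysis already carried out in the proof of Theorem \ref{fclt} part 1 (Equations \eqref{bij_eq}--\eqref{eq:S_k_0}), restricted to components of size $k_0+1$ lying in $\widehat{\mathbf C}_d$, one gets
$$\E\big[N^{(G)}_{d,t}\big] = \lambda_d^{d(k_0+1)}\frac{t^{k_0}}{(k_0+1)!}\,|W_d|\,v_{\ms{max}}^d(1+o(1)) = \frac{\r_d\,t^{k_0}}{(k_0+1)!}(1+o(1)) \longrightarrow \frac{K\,t^{k_0}}{(k_0+1)!},$$
using $\r_d\to K$ and $v(G)=v_{\ms{max}}$ for $G\in\AA^{\ms m}_{k_0}$; here the factor $t^{k_0}$ comes precisely from $|E|_{(k_0)}$-type scaling, i.e. from the Lebesgue measure of the rescaled configuration set growing like $t^{k_0}v_{\ms{max}}^d$. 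Since $(N^{(G)}_{d,t})_{t\le 1}$ is a Poisson process on $[0,1]$ with a deterministic, continuous, nondecreasing mean function $m_d(t):=\E[N^{(G)}_{d,t}]$ converging pointwise to the continuous function $m(t):=Kt^{k_0}/(k_0+1)!$, convergence in the Skorokhod topology follows from a standard criterion: for Poisson (more generally, independent-increment) counting processes, pointwise convergence of the mean measures to a mean measure with no atoms implies convergence in distribution in $D[0,1]$ — one may either invoke convergence of the mean measures as measures on $[0,1]$ together with \cite[Theorem 16.16 or its counting-process corollary]{billingsley}-style results, or argue directly via convergence of finite-dimensional distributions (immediate from the mean convergence and independence of increments) plus tightness, the latter being automatic here because $m$ is continuous so the modulus of continuity of the limiting process is controlled and no jump clustering occurs. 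Independence of the family $\{(N^{(G)}_{d,t})_{t\le1}\co G\in\AA^{\ms m}_{k_0}\}$ for each $d$ persists in the limit, so the limiting processes are independent as claimed.

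The main obstacle I anticipate is not any single deep estimate but rather bookkeeping: making the reduction to $\widehat{\mathbf C}_d$ and the ordering map $f$ interact cleanly with the ``birth time'' description, and checking carefully that for $\xx$ with $\RR(\xx;1)=G$ the event $\{\RR(\xx;t^{1/d})=G\}$ really is the threshold event $\{\diam(\xx)\le t^{1/d}\}$ — this uses that $G\in\AA^{\ms m}_{k_0}$ forces $G$ to be the cross-polytope graph (cf. the Betti-number example and \cite[Lemma 4.4]{Khale}), or more generally that among $\xx$ realizing $\RR(\xx;1)=G$ with $G$ connected on $k_0+1$ vertices, the graph $\RR(\xx;s)$ is monotone and its value stabilizes at $G$ precisely past the largest edge length; for a general connected $G$ one should note $\RR(\xx;s)\supseteq G$ can hold with strict inclusion, but such $\xx$ have been excluded by Lemma \ref{lem:reduc0}, so on the relevant configuration space the description is exact. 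The second mildly technical point is quoting the correct off-the-shelf Skorokhod-convergence statement for Poisson processes with converging continuous mean functions; I would phrase it as convergence of the associated random measures, which transfers to $D[0,1]$ because the limiting mean is atomless.
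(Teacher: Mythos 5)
Your argument follows the paper's proof essentially step by step: restrict the Poisson process $\zeta_d$ to the disjoint sets $\{\xx : \RR(\xx;1)=G\}$ to get independence, push forward along the ``birth time'' map to get a Poisson process on $[0,1]$ via the mapping theorem, compute the intensity by Mecke plus the asymptotics already established in the proof of Theorem \ref{fclt}, and conclude Skorokhod convergence from convergence of the intensity measures (the paper cites Kallenberg's Theorem 4.33 for this last step, which is the clean reference you were groping for). One slip worth correcting: the birth time of $\xx$ with $\RR(\xx;1)=G$ is $\bigl(\max_{\{i,j\}\in E(G)}|x_i-x_j|\bigr)^d$, \emph{not} $\diam(\xx)^d$ --- if $G$ is not the complete graph, the non-edges satisfy $|x_i-x_j|>1$, so $\diam(\xx)^d>1$ and your threshold condition $t\ge\diam(\xx)^d$ would never be met. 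The paper sidesteps this entirely by defining $\tau_G$ directly as $\sum_{\xx}\delta_{\min\{t\in[0,1]\,:\,\RR(\xx;t)=G\}}$ without committing to a closed form, which is the cleaner way to set it up (and this is not an issue of excluding bad configurations via Lemma \ref{lem:reduc0} as you suggest --- the set $\widehat{\mathbf C}_d$ still contains all configurations with $\diam(\xx)\le k_0$; the formula is simply wrong). The rest is sound.
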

%%%
With the help of these lemmas we can prove the Poisson approximation result.
%%%
\begin{proof}[Proof of Theorem \ref{pois_thm}]
    We summarize the discussion above.
    Lemmas \ref{lem:reduc0} and \ref{lem:reduc2} show that the processes $(A_{d,t})_{t \le 1}$ and $ (A''_{d,t,k_0})_{t\le 1}$ coincide with high probability.
    Therefore, Lemma \ref{lem:iidPoisson} yields the proof.
\end{proof}
%%%
\bigskip
The rest of the section is devoted to the proofs of the lemmas.
%%%
\begin{proof}[Proof of Lemma \ref{lem:reduc0}]
    To prove the claim, we consider the events
    \begin{align*}
        E_{d,1}
        &= \left\{ |G| = k_0 + 1 \text{ for all $t \in [0, 1]$ and $ G \in \Comp_d(t)$ with $\aaa(G) > 0$} \right\},
        \intertext{ and }
        E_{d,2} &= \Bigl\{ \RR(\xx;t) = \RR(\xx;1) \in \AA^{\ms m}_{k_0} \text{ and } \Rips(\xx;t) = \Rips(\xx;1) \in\Comp_d(1) \text{ for all $t \in [0, 1]$ and $ \xx \in \PP_{\neq}^{k_0+1}$ with} \\
         &\qquad \text{ and $\RR(\xx;t) \in \AA_{k_0}$} \Bigr\} .
    \end{align*}
    Note that, if $\xx \in \PP_{\neq}^{k_0+1}$ is such that $\GG(\xx;1) \in \Comp_d(1)$, then $\diam(\xx) \le k_0$. 
    Therefore, under the events $E_{d,1}$ and $E_{d,2}$, the processes $(A_{d, t})_{t\le 1}$ and $(A_{d, t, k_0}')_{t\le 1}$ are identical. Hence, it remains to show that $\P(E_{d,1}) \to 1$ and $\P(E_{d,2}) \to 1$ as $d\to\infty$
    
    Assume that the event $E_{d,1}$ does not hold.
    Then there exists $t\in[0,1]$ and $G\in\Comp_d(t)$ with $\aaa(G) > 0$ (which implies $|G|\neq k_0$ by definition of $k_0$) and $|G|\neq k_0+1$. 
    In particular such a component $G$ has cardinality at least $k_0+2$, and we can extract from it a tuple $(x_0,\ldots,x_{k_0+1})$ connected at time $t$. Therefore, 
    \begin{align*}
        E_{d,1}^c
        &\subseteq F_{d,1} \coloneqq \Bigl\{  \diam(\xx) \le k_0 + 1 \text{ for some $\xx \in \PP_{\neq}^{k_0+2} $} \Bigr\} .
    \end{align*}
    Recalling that $|W_d|^{1/d}\tff$ and that $\rho_d=\lambda_d^{d(k_0+1)}|W_d|\vm^d$, we can apply the Mecke formula as in the proof of Lemma \ref{lem:Meckeapplied} to deduce that
    \begin{align*}
        \P( E_{d,1}^c ) 
        \le \P(F_{d,1})
        &\le \E \Bigl[\sum_{\xx\in\PP_{\ne}^{k_0+2}} \one\bigl\{ \diam(\xx) \le k_0 + 1\bigr\}\Bigr]\\
        &= \lambda_d^d |W_d| (2(k_0 + 1) \lambda_d)^{d(k_0+1)} 
        = (2(k_0 + 1))^{d(k_0 + 1)}\vm^{-d}\la_d^d\r_d
        \to 0,
    \end{align*}
    where the limit follows from the assumptions that $\rho_d \to K$ and $\lambda_d\to 0$.

    % \medskip
    It remains to prove that $\P(E_{d,2})\to 1$.
    Assume that the event $E_{d,2}$ does not hold. 
    Then, one of the following events holds:
    \begin{align*}
        F_{d,2}^{(1)} 
        &= \Bigl\{ \RR(\xx;t) \in \AA_{k_0} \setminus \AA^{\ms m}_{k_0} \text{ for some $t \in [0,1]$ and $ \xx \in \PP_{\neq}^{k_0+1}$  }\Bigr\},
        \\F_{d,2}^{(2)} 
        &= \Bigl\{ \RR(\xx;t) \in \AA^{\ms m}_{k_0} \text{ and } \RR(\xx;t) \subsetneq \RR(\xx;1) \text{ for some $t \in [0,1]$ and $ \xx \in \PP_{\neq}^{k_0+1}$ }\Bigr\},
        \\F_{d,2}^{(3)} 
        &= \Bigl\{ \RR(\xx;t) \in \AA^{\ms m}_{k_0} \text{ and } \Rips(\xx;1) \notin\Comp_d(1) \text{ for some $t \in [0,1]$ and $ \xx \in \PP_{\neq}^{k_0+1}$ }\Bigr\}.
    \end{align*}
    
    Under the event $F_{d,2}^{(1)}$ there exists a connected graph $G\in\mathbb{G}_{k_0}$ with $v(G)<\vm$ and a tuple $(x_0,\dots,x_{k_0})\in\PP_{\neq}^{k_0+1}$ and with $G\subseteq\RR(\xx;1)$, or equivalently,
    \begin{align*}
        F_{d,2}^{(1)} \subseteq F'_{d,2} \coloneqq \left\{G\subseteq\RR(\xx;1)\text{ for some $G\in\mathbb{G}_{k_0}$ and $(x_0,\ldots,x_{k_0}) \in \PP_{\neq}^{k_0+1}$\! with   $v(G)<\vm$  }\! \right\}\! .
    \end{align*}
    Again, with the help of Mecke's formula, we get
    \begin{align*}
        \P(F'_{d,2}) &\le \sum_{\substack{G\in\mathbb{G}_{k_0} \\ v(G)<\vm}} \E \Bigl[\sum_{\xx\in\PP_{\ne}^{k_0+1}} \one\bigl\{  G\subseteq\RR(\xx;1) \bigr\}\Bigr] 
        =  \frac{\rho_d}{\vm^d} \sum_{\substack{G\in\mathbb{G}_{k_0} \\ v(G)<\vm}} \int_{\R^{d k_0}} \one\{G\subseteq \RR((o,\xx); 1)\} \d\xx .
    \end{align*}
    Note that the last indicator function equals $\prod_{(i,j)\in E(G)} \one\{ \norm{x_{i}-x_{j}} \le 1 \}$ which integrates to $v(G)^d$, by definition of $v(\cdot)$. Thus,
    \begin{align*}
        \P(F_{d,2}^{(1)}) 
        &\le \P(F'_{d,2})
        \le \sum_{\substack{G\in\mathbb{G}_{k_0} \\ v(G)<\vm}}\rho_d \Bigl(\frac{v(G)}{\vm}\Bigr)^d \to 0,
    \end{align*}
    where the convergence follows since $v(G)/\vm<1$.

    % \medskip
    Under $F_{d,2}^{(2)}$, we can consider $t\in[0,1]$ and $\xx \in \PP_{\neq}^{k_0+1}$ such that $\RR(\xx;t)\in\AA^{\ms m}_{k_0}$ and $\RR(\xx;t) \subsetneq \RR(\xx;1)$.
    In particular, there are two vertices of $\Rips(\xx;t)$ that are not connected at time $t$ but get connected by time~$1$.
    Thus, $\vm = v(\RR(\xx;t)) > v(\RR(\xx;1))$ and $F'_{d,2}$ holds.
    Therefore,
    $ \P(F_{d,2}^{(2)})
    \le \P(F'_{d,2}) 
    \to 0 .$

    % \medskip
    Under $F_{d,2}^{(3)}$ and we can consider $t\in[0,1]$ and $\xx \in \PP_{\neq}^{k_0+1}$ such that $\RR(\xx;t)\in\AA^{\ms m}_{k_0}$ and $\Rips(\xx;t)\notin \Comp_d(1)$.
    That implies that the connected component to which $x_0, \ldots , x_{k_0}$ belong contains additional points, and therefore $F_{d,1}$ is satisfied.
    Hence,
    $\P(F_{d,2}^{(3)}) 
    \le \P(F_{d,1})
    \to 0 $.
    
    Therefore,
    $ \P(E_{d,2}) 
    \geq 1 - \P( F_{d,2}^{(1)}) - \P(F_{d,2}^{(2)}) - \P(F_{d,2}^{(3)}) 
    \to 1 .$ 
\end{proof}

%%%
%%%
\begin{proof}[Proof of Lemma \ref{lem:reduc2}]
    Let $\zeta_d$ be a Poisson point process $\widehat{\mathbf{C}}_d \subseteq W_d^{k_0+1}$ of intensity measure $\lambda_d^{d(k_0+1)}$ times the Lebesgue measure.
    The statement of Lemma \ref{lem:reduc2} is equivalent to saying that $d_{\ms{KR}}(\xi_d,\zeta_d) \to 0$, where $d_{\ms{KR}}$ denotes the Kantorovich-Rubinstein distance.
    We refer to \cite{schultePoissonApprox} for a precise definition of this distance.
    For our purpose it is enough to know that this distance gives an upper bound for the probability $\P(\xi_d \ne \zeta_d)$ for an optimal coupling between $\xi_d$ and a homogeneous Poisson point process $\zeta_d$.

    Observe that, by construction, $\xi_d$ is a point process on $\widehat{\mathbf{C}}_d \subseteq W_d^{k_0+1}$ of intensity measure $\lambda_d^{d(k_0+1)}$ times the Lebesgue measure.
    This is precisely the intensity measure of $\zeta_d$.
    Thus, by Theorem 3.1 in \cite{schultePoissonApprox},
    $$ d_{\ms{KR}}(\xi_d,\zeta_d) 
    \le \frac{2^{k_0+2}}{(k_0+1)!} r(\widehat{\mathbf{C}}_d) , $$
    where
    \begin{align*}
        r(\widehat{\mathbf{C}}_d) 
        &\coloneqq \max_{0\le \ell\le k_0-1} \int_{W_d^{\ell+1}} \bigg(\int_{W_d^{k_0-\ell}} \one\{ \xx \in \widehat{\mathbf{C}}_d \} \lambda_d^{d(k_0-\ell)} \d (x_{\ell+1},\dots,x_{k_0})\bigg)^2 \lambda_d^{d(\ell+1)} \d (x_0,\dots,x_\ell) .
    \end{align*}
    Recall that
    $$ \widehat{\mathbf{C}}_d 
    \coloneqq f(\mathbf{C}_d ) 
    = \{ \xx \in W_d^{k_0+1} :  \, f(\xx)=\xx , \, \diam(\xx) \le k_0 \} .$$
    In particular,
    $$ \widehat{\mathbf{C}}_d 
    \subseteq \{ (x_0,\xx) \in W_d \times W_d^{ k_0} :\, \xx - x_0 \in [-k_0,k_0]^{d k_0} \} ,$$
    and thus, for all $d$ satisfying $2 k_0 \lambda_d< 1$,
    \begin{align*}
        r(\widehat{\mathbf{C}}_d) 
        &\le \max_{0\le \ell\le k_0-1} |W_d| \lambda_d \times (2 k_0 \lambda_d)^{d \ell} \times (2 k_0 \lambda_d)^{2 d (k_0-\ell)}
        = |W_d| \lambda_d^{d} (2k_0 \lambda_d)^{d (k_0+1)}.
    \end{align*}
    Recalling that $\rho_d = |W_d| \lambda_d^{d (k_0+1)} \vm^d \to K$, we have
    \begin{align*}
        r(\widehat{\mathbf{C}}_d) 
        &\le \rho_d \left( \frac{\lambda_d (2k_0)^{k_0+1}}{\vm} \right)^d 
        \in o(1)^d 
        \subseteq o(1),
    \end{align*}
    which implies that $d_{\ms{KR}}(\xi_d,\zeta_d) \to 0 $, thereby concluding the proof.
\end{proof}

%%%
\begin{proof}[Proof of Lemma \ref{lem:iidPoisson}]
    For $G\in\AA^{\ms m}_{k_0}$ we set 
    $$\zeta_G \coloneqq  \sum_{\xx \in \zeta} \one\{ \RR(\xx;1) = G \} \, \delta_{\xx},  $$
    which is the restriction of $\zeta$ to the set
    $$ \widehat{\mathbf{C}}_d 
    \coloneqq f(\mathbf{C}_d ) 
    = \{ \xx \in W_d^{k_0+1} :  \, f(\xx)=\xx , \, \RR(\xx;1) = G \} .$$
    Therefore, similarly as for $\zeta$, the restricted process $\zeta_G$ is a Poisson point process on $\widehat{\mathbf{C}}_{d,G}
    \subseteq \R^{d (k_0+1)}$ and has intensity measure $\lambda_d^{d (k_0+1)}$ times the Lebesgue measure.

    We will now map $\zeta_G$ to a Poisson process in $[0,1]$ by considering, for each $\xx\in\zeta_G$ the first time $t$ for which $\RR(\xx ; t) = G$.
    For this, we define
    $$ \tau_G 
    \coloneqq \sum_{\xx \in \zeta_G} \delta_{\min \{ t\in [0,1] : \RR(\xx ; t) = G\}} . $$
    By the mapping theorem for Poisson processes, this is a Poisson process in $[0,1]$.
    Note that, for any $\xx\in\zeta_G$ and any $t\in[0,1]$, we have $\RR(\xx; t) = G $ if and only if $t\geq \min \{ t'\in [0,1] : \RR(\xx ; t') = G\}$. Thus
    \begin{align}
    \label{eq:2907a}
        N_{d,t}^{(G)} 
        = \sum_{\xx\in  \zeta_G} \one\{\RR(\xx; t) = G \}
        = \tau_G ([0,t]). 
    \end{align}
    In particular $(N_t^{(G)})_{t\le 1}$ is a Poisson process.

    Moreover, we observe that $\{ \zeta_G : G \in \AA^{\ms m}_{k_0} \}$ forms a collection of independent Poisson process, because they are the restrictions of a single Poisson process to pairwise disjoint sets.
    Thus, we immediately get the independence of the processes $(N^{(G)}_{d,t})_{t \le 1}$, $G \in \AA^{\ms m}_{k_0}$.

    It remains only to compute the intensity measure of $(N^{(G)}_t)_{t \le 1}$.
    With the Mecke formula we get
    \begin{align*}
        \E \Bigl[\sum_{\xx\in \zeta_G} \one\{\RR(\xx; t) = G \}\Bigr]
        &= \int_{W_d^{k_0+1}} \one\{ f(\xx)=\xx , \, \RR(\xx;1) = G \} \,\lambda_d^{d(k_0+1)} \d (x_0,\ldots, x_{k_0})
        \\&= \frac{\lambda_d^{d(k_0+1)}}{(k_0+1)!} \int_{W_d^{k_0+1}} \one\{ \, \RR(\xx; t) = G \} \d (x_0,\ldots, x_{k_0}) .
    \end{align*}
    Therefore, with \eqref{eq:2907a}, we find
    \begin{align*}
        \E[N^{(G)}_{d,t}]
        &= \frac{|W_d|\lambda_d^{d(k_0+1)} }{(k_0+1)!} \int_{W_d^{k_0}} \one\{ \RR((o,\xx); t) = G \} \d \xx
        = \frac{|W_d|\lambda_d^{d(k_0+1)} t^{k_0}}{(k_0+1)!} \int_{W_d^{k_0}} \one\{ \RR((o,\yy); 1) = G \} \d \yy ,
    \end{align*}
    where in the last equality we performed the substitutions $x_i = s_d(t) y_i = t^{1/d} y_i$. Now, we point out that in the proof of Theorem \ref{fclt}, see Equation \eqref{subsetneq}, it was verified that
    $$\int_{W_d^{k_0}} \one\{ \RR((o,\yy); 1) = G \} \d\yy
    \sim \int_{W_d^{k_0}} \one\{ \RR((o,\yy); 1) \supseteq G \} \d\yy = v(G)^d.$$
    Therefore, noting that $G$ was chosen from $\AA^{\ms m}_{k_0}$ yields the asserted  convergence
    $\E[N^{(G)}_{d,t}]
    \sim \frac{\rho_d t^{k_0}}{(k_0+1)!}
    \to \frac{K t^{k_0}}{(k_0+1)!}.$
    By \cite[Theorem 4.33]{kallen}, the convergence of the intensity measures implies the weak convergence of the associated processes $(N_{d, t}^{(G)})_{t \le 1}$, thereby concluding the proof.
\end{proof}

\begin{remark}
    \label{rem:pclt}
    Note that the proof of Lemma \ref{lem:reduc0} extends to the regime where $\rho_d \to \infty$ and $\limsup_{d \to \infty}\rho_d^{1/d} < \inf\bigl\{{\vm}/v(G)\co G\in\mathbb{G}_{k_0}, v(G)<\vm\bigr\}$. Moreover, as a small side note, we would like to point out that under some circumstances it is possible to retrieve a CLT from the Poisson approximation. In particular, if $\rho_d\to\infty$ but $\rho_d \lambda_d^d \to 0$ and therefore, the total variation between $\xi_d$ and a Poisson point process tends to $0$, proceeding along the lines of \cite[Theorem 3.10]{thoppe} would give a shortcut to a normal approximation.
\end{remark}

\section{Proof of Theorem \ref{pair_fclt} -- multi-additive functionals}
\label{add_mult_sec}

In this section, we prove Theorem \ref{pair_fclt}. To that end, we first derive the expectation and variance asymptotics in Section \ref{add_var_sec}, and then establish the actual CLT in Section \ref{pair_mult_sec}.

\subsection{Expectation and variance asymptotics }
\label{add_var_sec}

Again, the first step in the proof of the expectation and  variance asymptotics is an expansion based on the Mecke formula. First, we generalize the definition of $\RR$ to multiple radii by setting
$$\RR(\xx; \tb) := \bigl(\{ \{i, j\} \su \{0,\dots,k\}\co \norm{x_i-x_j} \le s_d(t_\ell) \}\bigr)_{\ell \le m},$$
where $\tb = (t_1, \dots, t_m) \in [0, 1]^m$ and $\xx = (x_0, \dots, x_k) \in W_d^{k + 1}$.
We also let $\II_k$ denote the family of all $m$-tuples $\CCC$ of graphs on the vertex set $\{0, \dots, k\}$ with $\aaa(\CCC) \ne0$.

\bel \label{lem:Meckeappliedpair}
    Let $\aaa$ be an $m$-variate nonnegative multi-additive functional on graphs. For $d\ge1$ and $\tb = (t_1, \dots, t_m)$ with $0 \le t_1 \le \cdots \le t_m \le 1$ let
    $$A_{d,\tb} := \aaa\bigl(\Rips_d(\tb)\bigr),$$
    then
    \been
        \im
        $ \E[ A_{d,\tb}] = \sum_{k\ge0} ((k+1)!)^{-1}\la_d^{d (k + 1)}\sum_{\CCC \in \II_k} \aaa(\CCC) g_{1, \tb}(\CCC)$, where
        \begin{align*}
            g_{1, \tb}(\CCC)
            &= t_m^{ k}|W_d| \int_{W_d^{k}} e^{-t_m\la_d^d \, |B_1({(o,\xx)})|}\one\{\CCC  = \RR((o,\xx);\tb/t_m) \} \d \xx.
        \end{align*}
        \im
        $ \Var( A_{d,\tb}) = \sum_{k,k'\ge0}  ({(k+1)! \, (k'+1)!})^{-1} \sum_{\substack{\CCC\in\II_k \\\CCC'\in\II_{k'}}} \aaa(\CCC) \aaa(\CCC') w_{\tb}(\CCC, \CCC')$, where
        $$w_{\tb}(\CCC,\CCC') := \one\{\CCC=\CCC'\}\la_d^{d (k + 1)} g_{1, \tb}(\CCC)+ \la_d^{d (k + k' + 2)}\bigl(g_{2,\tb}( \CCC,\CCC') - g_{1, \tb}(\CCC)g_{1, \tb}(\CCC')\bigr),$$
        and
        \begin{align*}
            g_{2, \tb}(\CCC,\CCC')
            &= t_m^{k + k' + 2} \int_{\overline W_{d, t_m}^k} \int_{\overline W_{d, t_m}^{k'}} e^{-t_m\la_d^d |B_1({(\xx, \xx')})|} \one\{ \dist(\xx , \xx') > 1 \}
            \\&\qquad\times  \one\bigl\{\CCC  = \RR(\xx; \tb/t_m), \CCC' = \RR(\xx'; \tb/t_m) \bigr\} \d \xx' \d \xx .
        \end{align*}
    \enen
\enl

\bep
    Since the steps are analogous to those presented in Lemma \ref{lem:Meckeapplied}, we omit the proof.
\enp

After having established general first- and second-moment formulas, we can now proceed to deriving the asymptotics asserted in part 1 of Theorem \ref{pair_fclt}.

%
%EXP PROF
%
\bep[Proof of Theorem \ref{pair_fclt}, part 1, expectation]
    Since the arguments for multi-additive functionals are very similar to additive functionals, we only discuss the main differences.
    First, by part 1 of Lemma \ref{lem:Meckeappliedpair},
    \begin{equation*}
	    \E[A_{d,\tb}] = \sum_{k \ge k_0} \frac{\la_d^{d (k + 1)}}{(k+1)!} \sum_{\CCC \in \mc A_k^{(m)}} \aaa(\CCC) g_{1, \tb}(\CCC).
    \end{equation*}
    In particular, writing $\AA_k := \{G : |G|=k+1, a(G,\dots,G)>0\}$,
    $$\E[A_{d, \tb}] \ge \f{\la_d^{(k_0 + 1)d}}{(k_0 + 1)!} \sum_{G \in \AA_{k_0}} \aaa(G, \dots, G)g_{1,\tb}(G,\dots,G).$$
    To derive the asymptotics of $g_{1, \tb}(G, \dots, G)$, we note that $\one\bigl\{ G \su \RR\bigl((o,\xx); t_1/t_m\bigr)\bigr\}$ is at most
    $$ \one\bigl\{ G = \RR\bigl((o,\xx); t_1/t_m\bigr) = \cdots = \RR\bigl((o,\xx); t_{m}/t_m\bigr)\bigr\} + \one \{G \subsetneq \RR\bigl((o,\xx); 1\bigr)\}.$$
    Moreover, assuming $d$ is sufficiently large to guarantee that $|W_d|\ge (2 k_0)^d$,
    $$\int_{W_d^{k_0}} \one\bigl\{ G \su \RR\bigl((o,\xx); t_1/t_m\bigr)\bigr\}\d\xx = (t_1/t_m)^{k_0}v(G)^d.$$
    On the other hand, we recall from \eqref{subsetneq} in the proof of Theorem \ref{fclt} that
    $$ \int_{W_d^{k_0}} \one\bigl\{ G \subsetneq \RR\bigl((o,\xx); 1\bigr)\bigr\}\d\xx
    \le \sum_{e\not\in E(G)} v(G\cup e)^d. $$
    In particular, $\liminf_{d\tff} \E[A_{d,\tb}]/\r_d > 0$. For the upper bound, we note that for any $k \ge k_0$,
    $$ \sum_{\CCC \in \II_k} \aaa(\CCC) g_{1, \tb}(\CCC) \le  c_\ms{Dom} \sum_{G \in \mc A_k} g_{1, t_m}(G)\aaa(G, \dots, G),$$
    because for any $G\in\AA_k$, we have
    \begin{align*}
        \sum_{\CCC \in \II_k : G_m=G} \aaa(\CCC) g_{1, \tb}(\CCC)
        &= \sum_{\CCC \in \II_k : G_m=G} \aaa(\CCC) t_m^{ k}|W_d| \int_{W_d^{k}} e^{-t_m\la_d^d \, |B_1({(o,\xx)})|} \one\{\CCC = \RR((o,\xx);\tb/t_m)\} \d \xx
        \\&\leq  c_\ms{Dom} \aaa(G,\ldots,G) t_m^{ k}|W_d| \int_{W_d^{k}} e^{-t_m\la_d^d \, |B_1({(o,\xx)})|} \one\{G = \RR((o,\xx);1)\} \d \xx
        \\&= c_\ms{Dom} \aaa(G,\ldots,G) g_{1, t_m}(G).
    \end{align*}
    After setting  $\aaa'(G) \coloneqq \aaa(G, \dots, G)$, we cite the steps in the upper bound in the univariate case.
\enp

The variance asymptotics is obtained through similar steps. Nevertheless, for the reader's convenience, we present the main steps.

%
%VAR PROOF
%
\bep[Proof of Theorem \ref{pair_fclt}, part 1, variance]
    First, by part 2 of Lemma \ref{lem:Meckeappliedpair},
    \begin{align*}
	    \Var (A_{d, \tb}) &= \sum_{k,k'\ge k_0} \frac{1}{(k+1)! \,(k'+1)!}\sum_{\CCC \in \II_k} \sum_{\CCC' \in \II_{k'}} \aaa(\CCC) \aaa(\CCC') w_{\tb, \tb'}(\CCC, \CCC') .
    \end{align*}
    We rewrite this as $\Var (A_{d, \tb}) = S_= + S_{\ne}$, where
    \begin{align*}
        S_{\ne} & := \sum_{k,k'\ge k_0 } \f{\la_d^{d (k + k' + 2)}}{(k+1)! \, (k'+1)!}\sum_{\CCC\in\II_k} \sum_{\CCC'\in\II_{k'}} \aaa(\CCC) \aaa(\CCC') \bigl( g_{2, \tb, \tb}(\CCC, \CCC') - g_{1, \tb}(\CCC)g_{1, \tb}(\CCC')\bigr),
    \end{align*}
    and $S_= := \sum_{k \ge k_0 } ((k+1)!)^{-1}\la_d^{d (k + 1)}\sum_{G\in\II_k} \aaa(\CCC)^2 g_{1, \tb}(\CCC).$
    We first derive upper bounds for $S_=$ and $S_{\ne}$ separately, starting with $S_=$. To that end, writing $\AA_k := \{G : |G|=k+1, \, \aaa(G,\dots,G)>0\}$,    we proceed as in the expectation scaling to obtain that 
    $$\sum_{\CCC \in \II_k} \aaa(\CCC)^2 g_{1, \tb}(\CCC) \le { c_\ms{Dom}} \sum_{G \in \mc A_k} \aaa(G, \dots, G)^2g_{1, t_m}(G).$$
    Hence, repeating the steps leading to \eqref{sene_eq}, we arrive at $ S_= \in O(\r_d)$. 
    Moreover, arguing as in the derivation of \eqref{eq:s0c},
    \begin{align*}
        &\sum_{\CCC\in\II_k} \sum_{\CCC'\in\II_{k'}} \aaa(\CCC) \aaa(\CCC') \bigl( g_{2, t_m, t_m}(\CCC, \CCC') - g_{1, t_m}(\CCC) g_{1, t_m}(\CCC')\bigr) \\
        &\quad \le \sum_{\CCC\in\II_k} \sum_{\CCC'\in\II_{k'}} \aaa(\CCC) \aaa(\CCC')c_0^d|W_d|\int_{W_d^{k}} \int_{W_d^{k'}} \one\bigl\{\CCC = \RR((o, \xx); \tb), \CCC' = \RR((o, \xx'); \tb) \bigr\}\d \xx' \d \xx\\
        &\quad \le c_{\ms{Dom}}^2c_0^d|W_d|\Bigl(\sum_{G\in\AA_k}\aaa(G, \dots, G)v({ G})^d\Bigr)\Bigl( \sum_{G'\in\AA_{k'}} \aaa(G', \dots, G') v(G')^d\Bigr)
    \end{align*}
    for a suitable $c_0 > 0$. Thus, 
    $$ |S_{\ne}| \le c_{\ms{Dom}}^2c_0^d |W_d| \Bigl(\sum_{k\ge k_0 } \f{\la_d^{d (k + 1)}}{(k+1)!}\sum_{\CCC\in\AA_k} \aaa(G, \dots, G) v(G)^d\Bigr)^2.$$
    Hence, again repeating the steps leading to \eqref{sene_eq} gives that $|S_{\ne}| \in o(\r_d)$, thereby showing that $\Var(A_{d, \tb}) \in O(\r_d)$.
    Finally, for the lower bound, we obtain that
    $$S_= \ge \f{\la_d^{d (k_0 + 1)}}{(k_0 + 1)!}\sum_{G\in\AA_{k_0}} \aaa(G, \dots, G)^2 g_{1, \tb}(G, \dots, G),$$
    which is of the order $\r_d$ by the derivations in the univariate setting.
\enp

%
%CLT
%
\subsection{CLT}
\label{pair_mult_sec}

To prove the CLT for multivariate additive functionals, we adapt the strategy from Section \ref{mult_sec}. That is, we first reduce the task to establishing the CLT for the functional restricted to components in degree $k_0$. Then, we apply Stein's method in order to establish the CLT in the restricted setting. More precisely, we define the truncated functional
$$ A_{d,\tb,k_0} := \sum_{\substack{(G_1, \dots, G_m) \in \Comp( \Rips_d(\tb)) \\ |G_m| = k_0 + 1}} \aaa(G_1, \dots, G_m),$$
where
$$\Comp( \Rips_d(\tb)):= \Bigl\{\bigl(\Rips_d(t_1) \cap G , \dots, \Rips_d(t_{m - 1}) \cap G, G \bigr)\co \text{$G$ is a component of $\Rips_d(t_m)$ } \Bigr\}.$$

%
%TRUNC CLT
%
\bel[CLT for truncated multivariate additive functionals]
    \label{trunc_pair_lem}
    As $d \tff$,
    $$\f{{A_{d,\tb,k_0} - \E\bigl[A_{d,\tb,k_0}\bigr]}}{\sqrt{\Var(A_{d,\tb,k_0})}} \Rightarrow \mc N(0, 1),$$
    where the right-hand side denotes a standard normal random variable.
\enl

\bep[Proof of Theorem \ref{pair_fclt}, part 2]
    As in the proof of the variance asymptotics in part 1 of Theorem \ref{pair_fclt}, 
    $$\Var\Bigl(\sum_{k \ge k_0 + 1}A_{d,\tb,k}\Bigr) \in o(\r_d).$$
    Thus, combining Lemma \ref{trunc_pair_lem} with the Chebyshev inequality concludes the proof.
\enp

To prove Lemma \ref{trunc_pair_lem}, we again proceed in the vein of \cite[Proposition 6.1 and Theorem 4.1]{thomas}.
%
%\CCCLT PRF
%
\bep[Proof of Lemma \ref{trunc_pair_lem}]
    We write $A_{d,\tb,k_0}$ as a  sum of local contribution and apply Stein's method as presented in \cite[Theorem 2.4]{penrose}.
    Then, by definition,
    $$ A_{d,\tb,k_0}
    =	 \sum_{\substack{(G_1, \dots, G_m) \in \Comp( \Rips_d(\tb)) \\ |G_m| = k_0 + 1}} \aaa(G_1, \dots, G_m). $$
    We omit the rest of the proof since it is identical to that of Lemma \ref{clt_lem} after replacing	$t$ by $\tb$ and $G$ by $\CCC$.
\enp

\section{Proof of part 3 of Theorem \ref{fclt} -- Functional CLT}
\label{tight_sec}
We henceforth tacitly assume that $\r_d^{1/d}\to\ff$ and that the additive functional $\aaa$ satisfies the {conditions that $\aaa = \aaa^+ - \aaa^-$ where $\aaa^+$ and $\aaa^-$ are increasing nonnegative functionals such that $\max(\aaa^+(G),\aaa^-(G)) \in e^{O(|G|)}$}.
After having shown multivariate normality in Section \ref{mult_sec}, proving the functional CLT reduces to establishing tightness of $\big(\Var[A_{d,1}]^{-1/2}({A_{d, \cdot} - \E[A_{d, \cdot}]})\big)_{d \ge 1}$. The main work lies in proving the tightness. Since the sum of tight processes is tight, we may assume that $\aaa$ is increasing.

To prove tightness, we will verify a Chentsov-type moment condition. We will show that there exist
$C_*, \e > 0$ such that
\begin{align}
    \label{lav_eq}
    \r_d^{-2}\E\bigl[\,\overline{A}_{d, E}^4\bigr] \le C_*|E|^{1 + \e}
\end{align}
for all $d\ge 1$  and all intervals $E = [t_-, t_+] \su [0, 1]$, where 
$$\overline{A}_{d, E} := A_{d, E} - \E[A_{d, E}] := ( A_{d, t_+} - A_{d, t_-} ) - \E[A_{d, t_+} - A_{d, t_-}]$$
denotes the centered increment in the interval $E$. By \cite[Theorems 2 and 3]{bickel} or \cite[Theorem 2]{lavancier} this inequality implies tightness.

%
%BLUEPRINT
%
To achieve this goal, we build on the strategy that was already successfully implemented in \cite{cyl} for persistent Betti numbers of networks. First, we show that it suffices to verify the Chentsov condition for intervals $E$ that are \emph{$d$-big} in the sense that $|E|\ge \r_d^{-2/3}$. Then, we derive bounds on the variance and the fourth-order cumulant $c^4$ to deduce the result. 

%
%GRID LEM
%
\bepr[Reduction to $d$-big intervals]
    \label{grid_prop}
    If the Chentsov condition \eqref{lav_eq} holds for all $d \ge 1$ and $d$-big intervals $E \su \ot$, then the processes $\{\r^{-1/2}_d\overline{A}_{d,\,\cdot\,}\}_{d \ge 1}$ are tight in the Skorokhod topology.
\enpr
\bep
    %DAVYDOV
    The uniform expectation bounds in Theorem \ref{fclt}, part 1 give that 
    $\E[ A_{ d,E}]\in o(\r_d^{1/2})$ if $|E|\le \r_d^{-2/3}$. Hence, 
    \begin{align}
        \label{eq:dav}
        \sup_{E \su \ot \text{ $d$-small}}\r_d^{-1/2}\E[ A_{ d,E} ]\to 0.
    \end{align}
    Therefore, we satisfy all conditions of \cite[Theorem 2]{davydov}, namely $A_{ d, t}$ is increasing in $t$, \eqref{eq:dav} holds, the CLT for the finite dimensional marginal holds, and the Chentsov condition \eqref{lav_eq} holds for all $d \ge 1$ and $d$-big intervals $E \su \ot $. 
\enp
%
%VAR BOUND
%
\bepr[Cumulant bounds]
    \label{var_prop}
    It holds that
    $\sup_{d \ge 2}\sup_{\substack{E \su [0, 1] }} \r_d^{-1} c^4[A_{d, E}]< \ff.$
\enpr

Before establishing Proposition \ref{var_prop}, we explain how to derive Theorem \ref{fclt}.
%
%FCLT PRF
%
\bep[Proof of Theorem \ref{fclt}, part 3]
    First, observe that by applying the Mecke formula as in the proof of Lemma~\ref{lem:Meckeapplied}, one can show that the random variable $Z = A_{d, E}$ has finite fourth moments. Hence, it satisfies the cumulant identity $\E[(Z - \E[Z])^4] = 3\Var[Z]^2 + c^4[Z]$. Now, note that the uniform variance bounds in Theorem \ref{fclt}, part~1 imply that 
    $\sup_{d \ge 2}\sup_{\substack{E \su [0, 1] }} |E|^{-1}\r_d^{-1}\Var[A_{d, E}] < \ff$.
    Combining this with Proposition \ref{var_prop} shows that for a suitable $C > 0$, when $E$ is $d$-big, we have
    \begin{align*}
        \r_d^{-2}\E\bigl[\,\overline{A}_{d, E}^4\bigr] &= 3\r_d^{-2}\Var[A_{d, E}]^2 + \r_d^{-2}c^4[A_{d, E}] \le 3C^2 |E|^2 + C \r_d^{-1} \le 3C^2 |E|^{3/2} + C |E|^{3/2}.
    \end{align*}
    Hence, invoking Proposition \ref{grid_prop} concludes the proof.
\enp

%MOM MEAS
To establish Proposition \ref{var_prop}, we proceed along the lines of \cite{svane} and rely on the concept of cumulant and semi-cluster measures \cite{barysh,raic3}. To make the presentation less technical, we show only that 
\begin{align}
    \label{eq:74}
    \sup_{d \ge 2}\sup_{t \le 1}\r_d^{-1}c^4[A_{d, t}] < \ff.
\end{align}
Along similar lines one can show that $\sup_{d \ge 2}\sup_{t_1, \dots, t_4 \le 1}\r_d^{-1}c^4[A_{d, t_1},\dots, A_{d, t_4}] < \ff$ so that
the multilinearity of the mixed cumulant $c^4[\cdot, \cdot, \cdot, \cdot]$ implies the claim for the case of $A_{d,E}$.
First, recalling that $s_d(t) := t^{1/d}$, we write $ \Rips(x, \PP_d; s_d(t))$,  for the connected component of the Gilbert graph on $\PP_d \cup \{x\}$ at level $t^{1/d}$ if $x$ is the center of this component chosen according to a measurable and translation-covariant center function with values in $\PP_d \cup \{x\}$. Otherwise, we formally put $\Rips(x, \PP_d; s_d(t)) := \es$. Now, we define
\begin{align*}
    \mu_d := \sum_{{x \in \PP_d }} \aaa(\Rips(x, \PP_d; s_d(t))) \delta_{x} ,
\end{align*}
where $\delta_x$ denotes the Dirac measure.
Next, writing $\lan f, \mu \ran := \int f(x)\mu(\!\d x)$, we introduce its $n$th \textit{moment measure} and the $n$th \textit{cumulant measure} through
$$\lan f, M_d^n\ran := \E[\lan f_1, \mu_d\ran \cdots \lan f_n, \mu_d\ran], \quad \text{ and }\quad \lan f, c_d^n\ran := c^n[\lan f_1, \mu_d\ran,\dots, \lan f_n, \mu_d\ran],$$
where $f(x_1,\dots, x_n )= f_1(x_1) \cdots f_n(x_n)$  with $f_1, \dots, f_n \co W_d \to \R$ bounded and measurable.

Next, we decompose the cube $W_d^k$ according to how best to partition a configuration $\xx\in W_d^k$ of $k$ points in two clusters so that the distance between these clusters is maximized.
For a finite set $U$, we set $\xx_U := (x_i)_{i \in U} \in W_d^{|U|}$.
Then, we write
\begin{align*}
    D(\xx_U) &:= \max_{\{S, T\} \preceq U}\dist(\xx_S, \xx_T)
    =\max_{S,T} \min_{i \in S, j \in T}|x_i- x_j| ,
\end{align*}
where $\{S, T\} \preceq U$ is a partition with $S, T \ne \es$.
Note, that this can also be interpreted as the longest edge-length in a spanning tree of $\xx_U$ which maximizes this quantity. This interpretation implies the following $l_\infty$-diameter bounds
\begin{equation}
    \label{diam_bound}
    D(\xx_U) \leq \diam(\xx_U) \leq (|U| - 1) D(\xx_U) .
\end{equation}
Then, for any $\{S,T\} \preceq U$, the set $\s(S, T) := \{\xx\in W_d^{|U|}\co D(\xx) = \dist(\xx_S, \xx_T) > 0\}$ 
describes the family of configurations whose best separated clusters are $\xx_{S}$ and $\xx_{T}$.
The configurations which do not belong to any of these families belong to the diagonal $\De_d^{|U|} := \{(x, \dots, x)\co x\in W_d \} \su W_d^{|U|}$.
As in \cite[Equation (3.28)]{raic3}, we decompose the cumulant measure into a diagonal and an off-diagonal contribution. That is 
\begin{align}
    \label{dod_eq}
    c^4[A_{d,t}] = \lan 1,  c_d^4 \ran = \int_{\De_d^4 }1\d c_d^4 + \sum_{S, T}\int_{\s(S, T)} 1 \d c_d^4,
\end{align}
where the sum ranges over all unordered non-trivial partitions of \{1,2,3,4\} into two sets.

To prove Proposition \ref{var_prop}, we need suitable moment bounds for $\aaa\big(\Rips(o, \PP_d; s_d(t))\big) \le \aaa\big(\Rips(o, \PP_d; 1)\big)$.
%
%OMOM LEM
%
\bel[Moment bound]
    \label{omom_lem}
    Let $n \ge 1$. Then, $\sup_{\xx \in W_d^4}\E[\aaa\big(\Rips(o, \PP_d\cup \xx; 1)\big)^n] \in O\big(\r_d\la_d^{-d}|W_d|^{-1}\big)$.
\enl
%
%OMOM PRF
%
\bep[Proof of Lemma \ref{omom_lem}]
    Let $\aaa^{(n)}$ be the additive functional defined by $\aaa^{(n)}(G) = \aaa(G)^n$ for any connected graph~$G$. Set $A_{d,1}^{(n)} := \aaa^{(n)}(\Rips(\PP_d; 1))$.
    We have $\E [A_{d,1}^{(n)}] = \la_d^d |W_d| \E[ \aaa \big(\Rips(o, \PP_d; 1)\big)^n] $ as a consequence of the Mecke formula.
    On the other hand, part 1 of Theorem \ref{fclt} implies $ \E [A_{d,1}^{(n)}]
    \in O(\r_d)$, which yields the lemma.
\enp

%We prove Lemma \ref{omom_lem} at the end of this section. 
Now, for a subset $S \su \{1, \dots, n\}$, we extend the definition of the \textit{moment measure} by setting 
$\lan f, M_d^S\ran := \E\bigl[\prod_{s \in S}\lan f_s, \mu_d\ran \bigr]$, where $f(\xx_S)= \prod_{s \in S} f_s(x_s)$  with $f_s \co W_d \to \R$ bounded and measurable. 
We also work frequently with the \emph{mixed moments}
\begin{align}
    \label{mxmom_eq} 
    m_d^{S}(\xx_S) := \E\Bigl[\prod_{i \in S} \aaa \big(\Rips(x_{i}, \PP_d \cup \xx_S; s_d(t))\big) \Bigr],
\end{align}
defined for any $\xx_S \in W_d^{|S|}$.

To deduce Proposition \ref{var_prop}, we proceed as in \cite[Section 3.1]{raic3} and leverage that the moment measures can be expressed as 
\begin{align}
    \label{mxd_rep_eq}
    \d M_d^S = \sum_{\{S_1, \dots, S_p\}\preceq S} \la_d^{d p} m_d^{S}\d_{\ms s} \xx_{S_1} \cdots \d_{\ms s} \xx_{S_p},
\end{align}
where the $\d_{\ms s} \xx_{S_i}$ are the singular measures, i.e.,
\begin{equation}
    \label{sing_meas}
    \int_{\R^{d|S|}}g(\xx_S) { \d_{\ms s} \xx_{S} } %\d_{\ms s} \xx_{S_1} \cdots \d_{\ms s} \xx_{S_p} 
    = \int_{\R^d}g(x, \dots, x) \d x .
\end{equation}

%
%DIAG PROOF
%
\bel[Diagonal contribution]
    \label{lem:diag}
    It holds that $\int_{\De_d^4 }1\d c_d^4 =\la_d^d \, |W_d| \, \E\bigl[ \aaa \big(\Rips(o, \PP_d; s_d(t))\big)^4\bigr].$
\enl
\bep
    Note that arguing as in \cite[Lemma 3.1]{raic3}, we have that
    $\int_{\De_d^4 }1\d c_d^4 = \la_d^d\int_{W_d} \E\bigl[ \aaa \big(\Rips(x, \PP_d; s_d(t))\big)^4\bigr] \d x
    = \la_d^d \, |W_d| \, \E\bigl[ \aaa \big(\Rips(o, \PP_d; s_d(t))\big)^4\bigr]$.
\enp

To deal with the off-diagonal contributions, we fix nonempty disjoint sets $S, T \su \{1, 2, 3, 4\}$ with $S \cup T = \{1, 2, 3, 4\}$. Then, we will leverage the semi-cluster decomposition of the cumulant measure \cite{barysh,raic3}. To render the presentation self-contained, we recollect here some of the most fundamental properties, referring the reader to the aforementioned sources, for further details. %DIAG
More precisely, writing $C_d^{S_1, T_1} := M_d^{S_1 \cup T_1} - M_d^{S_1}M_{d}^{T_1}$ and $M_d^{\mc S_2} := \prod_{S' \in \mc S_2} M_d^{S'}$, we represent $c_d^4$ as
\begin{align}
    \label{st_dec}
    c_d^4 = \sum_{\substack{ \es \ne S_1 \su S\\ \es \ne T_1 \su T}}\sum_{ (\mc S_2, \mc T_2)}\a_{(S_1, T_1), (\mc S_2, \mc T_2)}C_d^{S_1, T_1} M_d^{\mc S_2}M_d^{\mc T_2},
\end{align}
for some coefficients $\a_{(S_1, T_1), (S_2, T_2)} \in \R$, 
where the second sum extends over all partitions $\mc S_2$ of $S \sm S_1$ and $\mc T_2$ of $T \sm T_1$.
Henceforth, we fix such choices $(S_1, T_1)$, $(\mc S_2, \mc T_2)$. 

In order to bound integrals over the signed measure $C_d^{S_1, T_1} $, we decompose the integration domain accordingly to the distance to the diagonal. More precisely, for $\ell \ge 1$, we set
$$U_{d, \ell} := \big\{ \xx_{S_1 \cup T_1} \in W_d^{S_1 \cup T_1} \co \ell -1 < D(\xx_{S_1 \cup T_1}) \le \ell\big \}.$$
Note, that for any $\xx_{S_1 \cup T_1} \in U_{d,\ell}$, by \eqref{diam_bound} we have $ \ell - 1 \le \diam(\xx_{S_1 \cup T_1}) \le ( |S_1 \cup T_1| - 1 ) \ell$, from which it follows that
\begin{equation} 
    \label{bndUdl}
    U_{d,\ell}
    \subseteq (\De_d^{S_1 \cup T_1} + B_{d |S_1 \cup T_1|} (o,3\ell) )\setminus \De_d^{S_1 \cup T_1} .
\end{equation}
A key step in the proof is to bound the volume of $U_{d, \ell}$.
Note that to simplify the expressions in further arguments, we choose not to optimize certain upper bounds.

%
%VOL LEM
%
\bel[Volume bound on $U_{d, \ell}$]
    \label{uvol_lem}
    Let $d, \ell \ge 1$. Then, $|U_{d, \ell}| \le c_{\ms A}^d\ell^{3d}|W_d|$ for some $c_{\ms A}> 1$. 
\enl
\bep
    By \eqref{diam_bound}, on the set $U_{d, \ell}$ the $l_\ff$-diameter of $\xx_{S_1 \cup T_1}$ is at most ${(|S_1\cup T_1|-1) D(\xx_{S_1\cup T_1}) \leq } 3\ell$.
    Hence, by the Fubini theorem,
    $|U_{d, \ell}| \le |W_d| \, (3\ell)^{d (|S_1 \cup T_1| -1)}\le 27^d \ell^{3d} |W_d|$.
\enp
The remainder of the proof is based on two auxiliary results whose proofs will be deferred to the end of this section. First, we bound the integral over the moment measures $M_d^{\mc S_2}$ and $M_d^{\mc T_2}$.

%
%MOMM-LEM
%
\bel[Bound for the moment measures]
    \label{mk_bound_lem}
    There exists constants $\cmm, d_{\ms{MM}} \ge 1$ with the following property. Let $r \ge1$, and $\mc S'$ be a partition of a nonempty set $S' \su \{1, 2, 3, 4\}$. Then, for all $d \ge d_{\ms{MM}}$,
    \been
        \im it holds that $M_d^{\mc S'}(B_{d|S'|}(y, r)) \le \cmm^d r^{d |S'|} \r_d|W_d|^{-1}$ for every $y \in W_d^{ |S'|}$;
        \im it holds that $M_d^{\mc S'}\big(B_{d|S'|}(y, r)\sm\De_d^{|S'|}\big) \le \cmm^d r^{d |S'|} \la_d^d\r_d|W_d|^{-1}$ for every $y \in W_d^{ |S'|}$;
        \im it holds that $M_d^{ S'}\big(\De_d^{|S'|} + B_{d|S'|}(o, r)\big) \le \cmm^d r^{d |S'|} \r_d$;
        \im it holds that $M_d^{ S'}\big((\De_d^{|S'|} + B_{d|S'|}(o, r)) \sm \De_d^{|S'|}\big) \le \cmm^d r^{d |S'|} \la_d^d \r_d$.
    \enen
\enl

The final step is to bound the signed measure on $C_d^{S_1, T_1} $.

%
%COR DEC
%
\bel[Fast decay of correlations]
    \label{cor_dec_lem}
    There exists a constant $c_{\ms{DC}} > 0$ with the following property.
     Assume that $\xx_{ S_1 \cup T_1}$ is such that $\ell := \dist(\xx_{S_1}, \xx_{ T_1}) \ge k_{0,+}:= 64(k_0 + 1)$.
    Then
    \begin{align*}
        \big|m_d^{ S_1 \cup  T_1}(\xx_{ S_1 \cup T_1}) - m_d^{ S_1}(\xx_{ S_1}) m_d^{ T_1}(\xx_{ T_1}) \big| \le c_{\ms{DC}}^{\ell d} \la_d^{\ell d/32}.
    \end{align*}
\enl

We now elucidate how to formally conclude the proof of Proposition \ref{var_prop}. After that, the main work lies in establishing Lemmas \ref{mk_bound_lem} and \ref{cor_dec_lem}.

%
%ODL PRF
%
\bep[Proof of \eqref{eq:74}]
    First, Lemma \ref{lem:diag} together with Lemma \ref{omom_lem} deal with the diagonal contributions in \eqref{dod_eq}.
    Therefore, by the decompositions \eqref{dod_eq} and \eqref{st_dec} of $c_d^4$, it remains only to bound by a constant, independent of $d$ and $t$, the quantities $\r_d^{-1} C_d^{S_1, T_1} M_d^{\mc S_2}M_d^{\mc T_2} (\s(S, T) )$, where $\{ S,T\}$ is a nontrivial partition of $\{1,2,3,4\}$, $S_1$ and $T_1$ are nonempty subsets of $S$ and $T$, and $\mc S_2$ and $\mc T_2$ are partitions of $S\setminus S_1$ and $T\setminus T_1$.
    
    Let $S,T,S_1,T_1, \mc S_2, \mc T_2$ be as above.
    Let $\xx\in\s(S,T)$.
    Then, we observe
    \[ D(\xx_{S_1\cup T_1})
    \le \diam(\xx_{S_1\cup T_1})
    \le \diam(\xx_{S\cup T})
    \le 3 D(\xx_{S\cup T})
    = 3 \dist(\xx_S,\xx_T)
    \le 3 \dist(\xx_{S_1},\xx_{T_1})
    \le 3 D(\xx_{S_1 \cup T_1}),\]
    where, the first and third inequalities are given by \eqref{diam_bound}, the second and fourth inequalities follows from the inclusions $S_1\subseteq S$ and $T_1\subseteq T$, the last inequality is implied by the definition of $D$, and the unique equality is due to $\xx\in\s(S,T)$.
    Note also that both $\diam(\xx_S)$ and $\diam(\xx_T)$ are bounded by $\diam(\xx_{S\cup T})$.
    Therefore, for any $\xx_{S_1 \cup T_1} \in U_{d,\ell} \cap \s(S_1,T_1) $, since $\ell-1 \le D(\xx_{S_1\cup T_1}) \le \ell$, one has
    \[ \dist(\xx_{S_1}, \xx_{T_1}) \ge (\ell-1)/3 ,
    \quad \xx_{S \sm S_1} \in B_{d(|S|-|S_1|)}(x_{\min(S_1)}, 3 \ell) ,
    \quad \text{and} \quad \xx_{T \sm T_1} \in B_{d(|T|-|T_1|)}(x_{\min(T_1)}, 3 \ell) .\]
    Hence, applying part 1 of Lemma \ref{mk_bound_lem} twice yields that 
    \begin{align*}
        C_d^{S_1, T_1} M_d^{\mc S_2} M_d^{\mc T_2} (\s(S, T))
        &\leq \int_{\s(S, T) } \big|C_d^{S_1, T_1}\big|M_d^{\mc S_2}M_d^{\mc T_2}(\!\d \xx)
        \\
	    &\le \sum_{\ell \ge 1} \int_{U_{d, \ell}'} \int_{B_{d(|S|-|S_1|)}(x_{\min(S_1)}, 3 \ell)} \int_{B_{d(|T|-|T_1|)}(x_{\min(T_1)}, 3 \ell)} \hspace{-2.8cm} M_d^{\mc T_2} (\!\d\xx_{ T \sm T_1}) \, M_d^{\mc S_2}(\!\d\xx_{S \sm S_1}) \, \big|C_d^{S_1, T_1}\big|(\!\d \xx_{S_1 \cup T_1}) \\
	    &\le \cmm^{2d} \r_d^{2}|W_d|^{-2} \sum_{\ell \ge 1}\int_{U_{d, \ell}'} (3 \ell)^{4d}\big|C_d^{S_1, T_1}\big|(\!\dxx_{S_1 \cup T_1}) .%,
    \end{align*}
    where 
    $U_{d, \ell}':= U_{d, \ell} \cap \big\{\xx_{S_1 \cup T_1} \in W_d^{S_1 \cup T_1} \co\dist( \xx_{S_1}, \xx_{T_1})> (\ell-1)/3\big \}$.
    Now,
    \begin{align}
        \notag
        \sum_{\ell \ge 1}\int_{U_{d, \ell}'} (3\ell)^{4d}\big|C_d^{S_1, T_1}\big|(\!\dxx_{S_1 \cup T_1})
        &\le \sum_{\ell \le 3k_{0,+}} (3\ell)^{4d} \int_{ U_{d, \ell}'} \big(M_d^{S_1 \cup T_1} + M_d^{S_1}M_d^{T_1}\big)(\!\dxx_{S_1 \cup T_1})\\
        &\phantom= +\sum_{\ell \ge 3k_{0,+} + 1} (3\ell)^{4d} \int_{U_{d, \ell}'} \big|C_d^{S_1, T_1}\big|(\!\dxx_{S_1 \cup T_1}).
        \label{secondsum}
    \end{align}
    We deal with the two sums separately.  We start by the first, which involves a finite number of summands, so we only need to show that each of these summands is of order at most $\r_d$.
    First, {as an immediate consequence of \eqref{bndUdl}, we} note that $U_{d, \ell}'$ is disjoint from the diagonal for $\ell \ge 1$. Hence, applying part 4 of Lemma \ref{mk_bound_lem} with $S' = S_1 \cup T_1$ and $r =  {3\ell}$ gives that 
    $$ (3 \ell)^{4d} \int_{U_{d, \ell}'} M_d^{S_1 \cup T_1} (\!\dxx_{S_1 \cup T_1}) 
    \le (3 \ell)^{4d} \cmm^d (3\ell)^d\r_d \la_d^d
    \le \r_d ,$$
    where the last inequality holds for $\ell\leq k_{0,+}$ and $d$ large enough.
    As a consequence of \eqref{bndUdl}, we have
    \begin{align*}
        \int_{ U_{d, \ell}'} M_d^{S_1}M_d^{T_1} (\!\dxx_{S_1 \cup T_1})
        &\leq \int_{(\De_d^{|T_1|} + B_{d|T_1|}(o, 3\ell)) \sm \De_d^{|T_1|}} \int_{B_{d|S_1|}(x_{\min{T_1}}, 6 \ell)) \sm \De_d^{|S_1|}} M_d^{S_1} (\!\dxx_{S_1}) M_d^{T_1} (\!\dxx_{T_1}).
    \end{align*} 
    Thus, applying parts 2 and 4 of Lemma \ref{mk_bound_lem}, and using that $|S_1|+|T_1|\le 4$, shows that 
    $$
    \ell^{4d} \int_{U_{d, \ell}'} M_d^{S_1}(\!\dxx_{S_1})M_d^{T_1} (\!\dxx_{T_1}) 
    \le\ell^{4d} \cmm^{2d} (6 \ell)^{4d} \r_d^2 \la_d^{2d} |W_d|^{-1}
    \le c^d \r_d \la_d^{d(k_0+3)}
    \le \r_d,
    $$
    for an appropriate constant $c>0$, where the two last inequalities hold for $\ell\le k_{0,+}$, and $d$ big enough.
    
    It remains only to show that the sum \eqref{secondsum} is of order at most $\r_d$.
    Let $\ell \ge 3k_{0,+} + 1$. 
    Since $\dist(\xx_{S_1},\xx_{T_1}) > (\ell-1)/3 \geq 0$ for any $\xx_{S_1\cup T_1}\in U_{d,\ell}'$, one has that $\d_{\ms s} \xx_{U_1} \cdots \d_{\ms s} \xx_{U_p}$ vanishes on $U_{d,\ell}'$ for any partition $\{U_1,\ldots,U_p\} \preceq S_1\cup T_1$ which is not the union of partitions $\{S_1', \dots, S_{p_S}'\}\preceq S_1$ and $\{T_1', \dots, T_{p_T}'\}\preceq T_1$. 
    Therefore, from the definition $C_d^{S_1,T_1} = M_d^{S_1 \cup T_1}- M_d^{S_1} M_d^{T_1} $ and \eqref{mxd_rep_eq}, we get
    $$ \d C_d^{S_1,T_1} 
    = \sum_{\{S_1', \dots, S_p'\}\preceq S_1} \sum_{\{T_1', \dots, T_q'\}\preceq T_1} \la_d^{d (p + q)} (m_d^{S_1 \cup T_1} - m_d^{S_1} m_d^{T_1} )\d_{\ms s} \xx_{S_1'} \cdots \d_{\ms s} \xx_{S_p'}\d_{\ms s} \xx_{T_1'} \cdots \d_{\ms s} \xx_{T_q'}
    , \quad \text{on }  U_{d,\ell}'.
    $$
    Note also, that by  Lemma~\ref{cor_dec_lem}, we have
    \[ |m_d^{S_1 \cup T_1} - m_d^{S_1} m_d^{T_1}|
    \le c_{\ms{DC}}^{(\ell-1) d / 3} \la_d^{(\ell-1) d/3\times 32} 
    \le c_{\ms{DC}}^{\ell d} \la_d^{\ell d/100} 
    \quad \text{ on $U'_{d,\ell}$ when $(\ell-1)/3 \ge k_{0,+} $,}\]
    where the second inequality holds because $(\ell-1) /3\times 32 \ge \ell/100$ when $\ell \ge 25$, which is ensured by $(\ell-1)/3 \ge k_{0,+} \geq 64$.
    Hence, we deduce from Lemma~\ref{uvol_lem} and the inequality $\ell \le e^\ell$ that, for sufficiently large $d$,
    \begin{align*}
        \sum_{\ell \ge 3k_{0,+} + 1} (3 \ell)^{4d}\int_{U_{d, \ell}'} \big|C_d^{S_1, T_1}\big|(\!\dxx_{S_1 \cup T_1}) 
        &\le \sum_{\ell \ge 3k_{0,+} + 1} (3 \ell)^{4d} c_{\ms{DC}}^{\ell d} \la_d^{\ell d/100} c_{\ms A}^d\ell^{3d}|W_d|
        \\&\le 3^{4d} c_{\ms A}^d |W_d| \sum_{\ell \ge 3k_{0,+} + 1} (e^{7d} c_{\ms{DC}}^{d} \la_d^{d/100} )^{\ell}
        \\&\le 3^{4d} c_{\ms A}^d |W_d| (e^{7d} c_{\ms{DC}}^{d} \la_d^{d/100} )^{3k_{0,+}}
        % \\&= \big(3^4 c_{\ms A} (e^{7} c_{\ms{DC}})^{3 k_{0,+}} v_{\max}^{-1} \la_d^{\frac{3 k_{0,+}}{100} - (k_0+1)} \big)^d  \r_d.
        \\&= \big(c \, \la_d^{\frac{3 k_{0,+}}{100} - (k_0+1)} \big)^d  \r_d ,
    \end{align*}
    for an appropriate constant $c>0$.
    Recalling that $k_{0,+} = 64(k_0+1)$, we see that the right-hand side is of order $o(\r_d)$. It concludes the proof.
\enp

We conclude this section by proving Lemmas %\ref{omom_lem}--\ref{cor_dec_lem}
\ref{mk_bound_lem} and \ref{cor_dec_lem}.

%
%PRF MKBLEM
%
\bep[Proof of Lemma \ref{mk_bound_lem}]
    We first establish a general bound that will be used to proved the four parts of the lemma.
    Let $B \subseteq \R^{d|S'|}$ be an arbitrary measurable set.
    From expressions \eqref{mxd_rep_eq} and \eqref{mxmom_eq} describing mixed moment measures we have
    $$ M_d^{S'}(B)
    = \sum_{p \le |S'|}\la_d^{d p}\sum_{\{S_1, \dots, S_p\}\preceq S'}\int_{B} \E\Bigl[\prod_{i \le p} \aaa\big(\Rips(x_{\min(S_i)}, \PP_d \cup \xx_{S'}; s_d(t))\big)^{|S_i|}\Bigr] \d_{\ms s} \xx_{S_1} \cdots \d_{\ms s} \xx_{S_p} .$$
     
    Note that if $X_1, \dots, X_k$ are nonnegative random variables, their mixed moments can be bounded in terms of their moments as follow. For any $n_1, \dots, n_k \ge 1$, it holds that
    $\E\big[\prod_{i \le k} X_i^{n_i}\big] \le \prod_{i \le k} \E[X_i^{m}]^{n_i/m},$
    where $m = n_1 +\cdots + n_k$. For $k=2$, this is Hölder's inequality. The general case follows by iteration.
    Therefore, using that $\aaa$ is increasing and invoking moment bounds from Lemma \ref{omom_lem} gives that 
    \begin{align}
        \label{MdSboundgen}
        M_d^{S'}(B)
        &\le c \r_d \la_d^{-d}|W_d|^{-1} \sum_{p \le |S'|}\la_d^{d p}\sum_{\{S_1, \dots, S_p\}\preceq S'}\int_{B} 1 \d_{\ms s} \xx_{S_1} \cdots \d_{\ms s} \xx_{S_p} ,
    \end{align}
    for a suitable $c > 0$.
    
    {\bf Part 1 \& 2.}
    We first prove the bound for a single-set partition, i.e, $\mc{S}'=\{S'\}$, which implies $M_d^{\mc{S}'}=M_d^{S'} $.
    From the definition \eqref{sing_meas} of the singular measures 
    $\d_{\ms s} \xx_{S_1},\ldots, \d_{\ms s} \xx_{S_p}$, we have
    \begin{equation*}
        \int_{B_{d|S'|}(y, r)} 1 \d_{\ms s} \xx_{S_1} \cdots \d_{\ms s} \xx_{S_p}
        = (2 r)^p ,
        \quad \text{ and } \quad 
        \int_{B_{d|S'|}(y, r) \cap \De_d^{|S'|}} 1 \d_{\ms s} \xx_{S_1} \cdots \d_{\ms s} \xx_{S_p}
        = 2 r \one \{ p = 1 \},
    \end{equation*}
    for any partition $\{S_1, \dots, S_p\}\preceq S'$.
    Note that for $p=1$, these two integrals are equal.
    Therefore, \eqref{MdSboundgen} gives
    \begin{align}
        \label{boundpart1singleset}
        M_d^{S'}(B_{d|S'|}(y, r))
        &\le c \r_d \la_d^{-d}|W_d|^{-1} \sum_{p \le |S'|}\la_d^{d p}\sum_{\{S_1, \dots, S_p\}\preceq S'} (2 r)^{d p}
        \le c' \r_d |W_d|^{-1} (2r)^{d |S'|} ,
        \intertext{and}
        \notag
        M_d^{S'}(B_{d|S'|}(y, r) \sm \De_d^{|S'|})
        &\le c \r_d \la_d^{-d}|W_d|^{-1} \sum_{2 \le p \le |S'|}\la_d^{d p}\sum_{\{S_1, \dots, S_p\}\preceq S'} (2 r)^{d p}
        \le c' \r_d |W_d|^{-1}  \la_d^d (2r)^{d |S'|} ,
    \end{align}
    for a suitable $c' > 0$, thereby proving the asserted bounds on $M_d^{S'}(B_{d|S'|}(y, r))$ and $M_d^{S'}(B_{d|S'|}(y, r) \sm \De_d^{|S'|})$.
    
    For the case of an arbitrary partition $\mc{S}'$ of $S'$, we exploit that $M_d^{\mc S'} := \prod_{S'' \in \mc S'} M_d^{S''}$.
    Thus, the bounds \eqref{boundpart1singleset} for single-set partitions, applied to each element of $\mc{S}'$, provides
    \begin{equation}
        \label{boundpart1}
        M_d^{\mc S'}(B_{d|S'|}(y, r)) 
        \le \cmm^d r^{d|S'|} (\r_d|W_d|^{-1})^{|\mc S'|} .   
    \end{equation}
    This implies the claim of part 1, since  $\r_d|W_d|^{-1} \in o(1)$.
    For part 2, we can assume furthermore that $|\mc{S}'|\geq 2$ since the case $|\mc{S}'| = 1$ is proven above. Observing that $\r_d|W_d|^{-1} = \la_d^{d} (\la_d^{k_0} \vm)^d$, we get from \eqref{boundpart1} that
    $$ M_d^{S'}(B_{d|S'|}(y, r) \sm \De_d^{|S'|})
    \le M_d^{\mc S'}(B_{d|S'|}(y, r)) 
    \le \cmm^d r^{d|S'|} \r_d|W_d|^{-1} \la_d^d ,$$
    as claimed.
    
    {\bf Part 3 \& 4.}
    In order to shorten notation, we set $B:= \De_d^{|S'|} + B_{d |S'|}(o, r)$.
    Note that for any partition $\{S_1, \dots, S_p\}\preceq S'$ and any $(\xx_{S_1}, \dots, \xx_{S_p}) \in B$ the $l_\ff$-distance of any {$\xx_{s_i}$ to $\xx_{s_1}$ is at most $2r$, for arbitrary $s_i\in S_i$ and $s_1\in S_1$}. Thus,
    \begin{align*}
        \int_B 1\d_{\ms s} \xx_{S_1} \cdots \d_{\ms s} \xx_{S_p} 
        &\le (2r)^{d (p - 1)}\int_{W_d^{S_1}} 1\d_{\ms s} \xx_{S_1}
        = (2r)^{d (p - 1)}|W_d|, 
        && \text{if } p\geq 1 , \text{ and }
        \\\int_B 1\d_{\ms s} \xx_{S_1} 
        &= \int_{\De_d^{|S'|}} 1\d_{\ms s} \xx_{S_1},
        && \text{if } p=1 .
    \end{align*}
    Thus \eqref{MdSboundgen} gives
    \begin{align*}
        M_d^{S'}(B)
        &\le c \r_d \sum_{p \le |S'|} (2r\la_d)^{d(p-1)} 
        \le c'' \r_d r^{d (|S'|-1)} ,
        \text{ and}\\
        M_d^{S'}(B \sm \De_d^{|S'|}) 
        &\le c \r_d \sum_{2 \le p \le |S'|} (2r\la_d)^{d(p-1)} 
        \le c'' \r_d (2 \la_d)^d r^{d (|S'|-1)} ,
    \end{align*}
    for a suitable $c''>0$, proving the claimed bounds.
\enp

To prove Lemma \ref{cor_dec_lem}, we need to bound the probability that the typical component in the Gilbert graph is large. We write $\mc L(\varphi) := \sup_{x\in \varphi}| x|$ for the maximal distance of an element of a locally finite $\vp \su \R^d$ to $o$ and let $ \Rips'(x, \PP_d)$ denote the connected component of the Gilbert graph on $\PP_d \cup \{x\}$ at level $1$ containing $x$.
\bel[Low probability for large components]
    \label{ark_lem}
    Let $d , \ell \ge 1$. Then,
    $\P\big(\mc L\big(\Rips'(o, \PP_d)\big) > \ell \big) \le (2\la_d)^{d \ell }.$
\enl
\bep
    If $\mc L\big(\Rips'(o, \PP_d)\big) > \ell$, then there exists a self-avoiding path in the Gilbert graph starting from the origin and consisting of at least $\ell$ further distinct nodes. By the Mecke formula, the probability for such a path to exist is at most the asserted $(2\la_d)^{d \ell}$.
\enp

\bep[Proof of Lemma \ref{cor_dec_lem}]
    %STAB RAD
    We define the \textit{stabilization radius}
    $R (\xx_{S_1}) := \max_{x \in \xx_{S_1}} \mc L(\Rips'(x, \PP_d \cup \xx_{S_1})- x)$
    to be the maximal elongation of the connected components centered at some $x \in \xx_{S_1}\in W_d^{S_1}$. The key property of such a stabilization radius is that it allows for a factorization of the mixed moments. More precisely, for a set  $S''\supseteq S_1$, we may decompose the mixed moment as 	
    $m_d^{S''} = m_{d, \le}^{S''} + m_{d, >}^{S''}$, where 
    \begin{align*}
        m_{d, \le}^{S''} (\xx_{S''}) &:= \E\Big[\one\{R (\xx_{S_1}) \le \ell\}\prod_{i \in S''} \aaa(\Rips(x_{i}, \PP_d \cup \xx_{S''}; s_d(t)))^{}\Big];\\
        m_{d, >}^{S''}( \xx_{S''}) &:= \E\Big[\one\{R (\xx_{S_1}) >\ell\}\prod_{i \in S''} \aaa(\Rips(x_{i}, \PP_d\cup \xx_{S''}; s_d(t)))^{}\Big].
    \end{align*}
    Now, the spatial independence of the Poisson point process $\PP_d$ implies that $m_{d, \le}^{S_1 \cup T_1} = m_{d, \le}^{S_1} m_d^{T_1}$. Thus,
    \begin{align*}
        |m_d^{S_1 \cup T_1} - m_d^{S_1}m_d^{T_1}| = |m_{d, >}^{S_1 \cup T_1} - m_{d, >}^{S_1}m_d^{T_1}| \le m_{d, >}^{S_1 \cup T_1} + c\la_d^d m_{d, >}^{S_1},
    \end{align*}
    where in the last step we invoked Lemma \ref{omom_lem} and that $\r_d|W_d|^{-1} \in O(\la_d^d)$ to see that $m_d^{T_1} \in O(\la_d^d)$. As above, we recall that if $X_1, \dots, X_k$ are nonnegative random variables and if $n_1, \dots, n_k \ge 1$, then 
    $\E\big[\prod_{i \le k} X_i^{n_i}\big] \le \prod_{i \le k} \E[X_i^{m}]^{n_i/m},$
    where $m = n_1 +\cdots + n_k$. Hence, by Lemma \ref{omom_lem} and Cauchy-Schwarz inequality,
    $ m_{d, >}^{S_1 \cup T_1} { (\xx_{S_1\cup T_1})}
    \le c' \sqrt{\P(R(\xx_{S_1}) > \ell )} $
    for a suitable $c' > 0$. Now, we note that $R(\xx_{S_1}) > \ell$ implies that $\mc L(\Rips'(x, \PP_d ) -x) > \ell/16$ for some $x \in \xx_{S_1}$. Thus, applying Lemma \ref{ark_lem}
    shows that $m_{d, >}^{S_1 \cup T_1} {(\xx_{S_1\cup T_1})} \le c'' (2\la_d)^{d \ell /32}$ for some $c''>0$.
    Noting that an analogous bound holds for $m_{d, >}^{ S_1}$ concludes the proof.
\enp

\section*{Acknowledgements}
The authors thank D.~Yogeshwaran for pointing out the relation between Poisson approximation and CLT elucidated in  Remark \ref{rem:pclt}.
The authors acknowledge financial support of the CogniGron research center and the Ubbo Emmius Funds (University of Groningen).

%%%%%%%%%%%%%%%%%%%%%%%%%
\bibliography{lit}

\end{document}